\newcommand{\ft}{\mathbf t}
\newcommand{\fs}{\mathbf s}
\newcommand{\BR}{{\mathbb {R}}}
\newcommand{\RG}{{\mathrm {G}}}
\newcommand{\RK}{{\mathrm {K}}}
\newcommand{\RN}{{\mathrm {N}}}
\newcommand{\RP}{{\mathrm {P}}}
\newcommand{\RR}{{\mathrm {R}}}
\newcommand{\RT}{{\mathrm {T}}}
\newcommand{\RU}{{\mathrm {U}}}
\newcommand{\GL}{{\mathrm{GL}}}
\newcommand{\ind}{{\mathrm{ind}}}
\newcommand{\rk}{{\mathrm{k}}}
\newcommand{\con}{\textit{C}}
\newcommand{\diag}{\operatorname{diag}}
\newcommand{\oN}{\operatorname{N}}
\newcommand{\oZ}{\operatorname{Z}}
\renewcommand{\rk}{\mathrm k}
\newcommand{\C}{\mathbb{C}}
\newcommand{\R}{\mathbb R}
\newcommand{\abs}[1]{\lvert#1\rvert}
\newcommand{\rd}{\mathrm{d}}
\newcommand{\wo}{\widehat{\otimes}}
\newcommand{\la}{\langle}
\newcommand{\ra}{\rangle}
\newcommand{\be}{\begin {equation}}
\newcommand{\ee}{\end {equation}}
\newcommand{\bee}{\begin {equation*}}
\newcommand{\eee}{\end {equation*}}
\theoremstyle{Theorem}
\theoremstyle{Theorem}
\newtheorem{introconjecture}{Conjecture}
\newtheorem{introtheorem}[introconjecture]{Theorem}
\theoremstyle{Theorem}
\newtheorem{lem}{Lemma}[section]
\theoremstyle{Theorem}
\newtheorem{prp}{Proposition}[section]
\newtheorem{lemp}[prp]{Lemma}
\theoremstyle{Plain}
\theoremstyle{Definition}
\title{Uniqueness of Rankin-Selberg periods}
\author{Fulin Chen}
\address{Academy of Mathematics and Systems Science, Chinese Academy of Sciences,
Beijing, 100190, China} \email{fulinchen@amss.ac.cn }
\author{Binyong Sun}
\address{Academy of Mathematics and Systems Science\\
Chinese Academy of Sciences\\
Beijing, 100190, P.R. China} \email{sun@math.ac.cn}
\subjclass[2010]{22E50} \keywords{Rankin-Selberg convolution,
irreducible representation, multiplicity one theorem}
\begin{document}

\begin{abstract}Let $\rk$ be a local field of characteristic zero. Rankin-Selberg's local zeta integrals
produce linear functionals on generic irreducible admissible smooth
representations of $\GL_n(\rk)\times \GL_r(\rk)$, with certain
invariance properties. We show that  up to scalar multiplication,
these linear functionals are determined by the invariance
properties.
\end{abstract}
\maketitle
\section{Introduction}

The goal of this paper is to prove uniqueness of certain local
linear functionals that naturally occur in the study of
Rankin-Selberg convolutions. We first treat the archimedean case,
and leave the non-archimedean case to Section \ref{sec5}. So assume
that  $\rk$ is an  archimedean local filed throughout this paper
except for  Section \ref{sec5}.

The representations that we will consider are so called
Casselman-Wallach representations. They appear as archimedean
components of automorphic representations. Recall that a
representation of a real reductive group is called a
Casselman-Wallach representation if it is smooth, Fr\'{e}chet,  of
moderate growth, and its Harish-Chandra module has finite length.
The reader may consult \cite{C}, \cite[Chapter 11]{W} or \cite{BK}
for details about Casselman-Wallach representations.

To explain the motivation of our paper, we first give a brief
introduction to the local Rankin-Selberg integrals.  The reader is
referred to \cite{CPS,J,JPS,JS2} for more details. Fix two integers
$n>r\geq 0$. Let $\pi$ and $\sigma$ be generic irreducible
Casselman-Wallach representations of $\GL_n(\rk)$ and $\GL_r(\rk)$,
respectively.
 For abbreviation and as usual, we do not distinguish a representation with its underlying vector space.
For every integer $m\geq 0$, write $\oN_m(\rk)$ for the subgroup of
$\GL_m(\rk)$ consisting of upper-triangular unipotent matrices.

Fix generators
\[
  \lambda_n\in \mathrm{Hom}_{\oN_n(\rk)}(\pi,\psi_n)\quad\textrm{and}\quad \lambda_r\in \mathrm{Hom}_{\oN_r(\rk)}(\sigma,\psi_r^{-1})
\]
of one dimensional spaces, where $\psi_n$ is a generic unitary
character of $\oN_n(\rk)$, and $\psi_r$ is its restriction to
$\oN_r(\rk)$ through the embedding
\[
g\mapsto \left[
\begin{array}{cc}
                                                 g & 0 \\
                                                 0 & 1_{n-r} \\
                                               \end{array}
                                             \right] \ \textrm{(for every $m\geq 1$, $1_m$ denotes the identity matrix of size $m$).}
\]
 For every $u\in \pi$ and $v\in \sigma$, define the Whittaker functions
$W_u:\GL_n(\rk)\rightarrow \mathbb C$ and
$W'_v:\GL_r(\rk)\rightarrow \mathbb C$ by
$$
 W_u(g):=\lambda_n(g.u)\quad \text{ and }\quad W'_v(g):=\lambda_r(g.v).
$$
The Rankin-Selberg integral associated to the pair $(W_u,W_v')$ is
defined to be
\begin{equation}
\label{rsintegral}\Psi(s,W_u,W'_v):=\int_{\oN_r(\rk)\backslash
\GL_r(\rk)}W_u\left( \begin{bmatrix} g & 0\\ 0&
1_{n-r}\end{bmatrix}\right) W'_v(g)
|\mathrm{det}g|^{s-\frac{n-r}{2}}\mathrm{d}g,\end{equation} where
$s\in \mathbb{C}$ and $\mathrm{d}g$ denotes a right
$\GL_r(\rk)$-invariant positive Borel measure on
$\oN_r(\rk)\backslash \GL_r(\rk)$. The integral is absolutely
convergent when the real part of $s$ is sufficiently large, and has
a meromorphic continuation to the whole complex plane $\mathbb{C}$.
Moreover, for every $s_0\in \mathbb{C}$, we have a well-defined
non-zero continuous linear functional
\begin{equation}\label{psis0}
  \Psi_{s_0}: \pi \widehat{\otimes} \sigma\rightarrow \mathbb{C}, \quad u\otimes v\mapsto \frac{\Psi(s,W_u,W'_v)}{\mathrm L(s,\pi\times \sigma)}|_{s=s_0},
\end{equation}
 where $\mathrm L(s,\pi\times \sigma)$ denotes the local L-function, and the symbol ``$\widehat \otimes$" stands for the completed projective tensor product.

Define the $r$-th Rankin-Selberg subgroup $\mathrm R_r$  of
$\GL_n(\rk)$ to be \be\label{gp} \mathrm R_r:=\left\{\begin{bmatrix} g & u \\
0 & h \end{bmatrix}\mid g\in \GL_r(\rk), u\in \mathrm
M^0_{r,n-r},h\in \oN_{n-r}(\rk)\right\}, \ee where
$$
\mathrm M^0_{r,n-r}:=\{x\in \mathrm M_{r,n-r}(\rk)\mid  \textrm{the
first collum of $x$ is  zero}\}.
$$
Here and henceforth, for every pair $i, j$ of non-negative integers,
$\mathrm M_{i,j}(\rk)$ denotes the space of $i\times j$-matrices
with coefficients in $\rk$.

View $\sigma$ as a representation of $\mathrm R_r$ by the inflation
through the obvious homomorphism $\mathrm R_r\rightarrow
\GL_r(\rk)$. Then the tensor product $\pi \widehat{\otimes} \sigma$
is also a representation of $\mathrm R_r$. It is easily checked that
the functional $\Psi_{s_0}$ of \eqref{psis0} has the following
invariance property:
\begin{equation}\label{equipsi}
 \Psi_{s_0}(g.w)=\chi_{s_0}(g)\Psi_{s_0}(w),\quad g\in \mathrm R_r,\,w\in \pi\widehat \otimes \sigma,
 \end{equation}
 where $\chi_{s_0}$ denotes the following character on $\mathrm R_r$:
\[
    \begin{bmatrix} g & u \\ 0 & h \end{bmatrix}\mapsto
   |\mathrm{det}g|^{\frac{n-r}{2}-s_0}\,\psi_n\left(\begin{bmatrix} 1 & 0 \\ 0 & h \end{bmatrix}\right).
\]

Note that every character on $\mathrm R_r$ factors through the
quotient map $\mathrm R_r\rightarrow \GL_r(\rk)\times
\oN_{n-r}(\rk)$. We say that a character on $\mathrm R_r$ is generic
if its descent to $\GL_r(\rk)\times \oN_{n-r}(\rk)$ has the form
$\chi_r\otimes \psi$, where $\chi_r$ is a character on $\GL_r(\rk)$,
and $\psi$ is a generic unitary character on $\oN_{n-r}(\rk)$. It is
clear that $\chi_{s_0}$ is a generic character on $\mathrm R_r$.

The following uniqueness theorem implies that the Rankin-Selberg
integral is characterized by the invariance property
\eqref{equipsi}.

\begin{introtheorem}\label{main}
For all irreducible Casselman-Wallach representations $\pi$ of
$\GL_n(\rk)$, and $\sigma$ of $\GL_r(\rk)$, and for all generic
characters $\chi$ of $\mathrm R_r$, one has that
\begin{equation}\label{homnr}
  \mathrm{dim}\ \mathrm{Hom}_{\mathrm R_r}(\pi\widehat \otimes \sigma,\chi)\le 1.
\end{equation}

\end{introtheorem}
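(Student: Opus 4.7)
The plan is a two-step reduction: first, a twisted Jacquet (coinvariant) functor reduces the $\mathrm R_r$-equivariance problem to a $\GL_r(\rk)$-equivariance problem; then Frobenius reciprocity combined with Shalika's uniqueness of Whittaker models for $\sigma$ yields the dimension bound. A standard twisting argument (twisting $\sigma$ by a character of $\GL_r(\rk)$ and, if needed, adjusting $\psi_n$ via the diagonal torus of $\GL_n$) reduces to $\chi = \chi_{s_0}$ for a specific $s_0 \in \C$. Let $\mathrm U \subseteq \mathrm R_r$ denote the normal subgroup of matrices whose top-left $r\times r$ block equals $1_r$; this is a unipotent subgroup of $\oN_n(\rk)$ with $\mathrm R_r/\mathrm U \cong \GL_r(\rk)$, and a direct computation shows $\chi_{s_0}|_\mathrm U = \psi_n|_\mathrm U$. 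Since $\sigma$ is inflated from $\GL_r(\rk)$ it is trivial on $\mathrm U$, so passing to twisted (Hausdorff) coinvariants yields
\[
\Hom_{\mathrm R_r}(\pi \widehat\otimes \sigma, \chi_{s_0}) = \Hom_{\GL_r(\rk)}(J \widehat\otimes \sigma, \chi_r),
\]
where $J := \pi_{\mathrm U, \psi_n|_\mathrm U}$ is the coinvariant Casselman-Wallach $\GL_r(\rk)$-module and $\chi_r := \chi_{s_0}|_{\GL_r(\rk)} = |\det|^{(n-r)/2 - s_0}$.

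The Whittaker functional $\lambda_n$ descends to a continuous linear functional $\bar\lambda_n\colon J \to \C$ which is $(\oN_r(\rk), \psi_r)$-equivariant; this induces a $\GL_r(\rk)$-equivariant map
\[
\Theta\colon J \longrightarrow C^\infty_{\mathrm{mod}}(\oN_r(\rk)\backslash \GL_r(\rk), \psi_r), \qquad \bar u \longmapsto \bigl(g \mapsto \bar\lambda_n(g \cdot \bar u)\bigr),
\]
into the moderate-growth smooth $\psi_r$-Whittaker model. The central technical step is to show $\Theta$ is a closed $\GL_r(\rk)$-equivariant embedding, so that $\GL_r(\rk)$-equivariant continuous functionals on the codomain restrict injectively to functionals on $J$. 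Granting this, one obtains
\[
\Hom_{\GL_r(\rk)}(J \widehat\otimes \sigma, \chi_r) \hookrightarrow \Hom_{\GL_r(\rk)}\bigl(C^\infty_{\mathrm{mod}}(\oN_r(\rk)\backslash \GL_r(\rk), \psi_r) \widehat\otimes \sigma, \chi_r\bigr) \cong \Hom_{\oN_r(\rk)}(\sigma, \psi_r^{-1}),
\]
where the last isomorphism is Frobenius reciprocity in the smooth moderate-growth category, combined with $\chi_r|_{\oN_r} = 1$ (since $\chi_r$ factors through $\det$). Shalika's uniqueness of Whittaker models then bounds this space by $1$.

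\emph{Main obstacle.} The principal difficulty is the injectivity of $\Theta$, together with the compatible extension of $\GL_r(\rk)$-equivariant continuous functionals along it. In the $p$-adic setting both are consequences of Bernstein-Zelevinsky theory via the Kirillov model. In the archimedean setting, the injectivity requires delicate structure theorems for Hausdorff twisted coinvariants of Casselman-Wallach representations---an archimedean Kirillov-type embedding in the spirit of work by Aizenbud-Gourevitch-Sahi and by the second author---while the equivariant functional extension relies on further duality and density arguments in the Fr\'echet moderate-growth category. Once this structural input is in place, the remainder of the argument (the twist to $\chi_{s_0}$, the coinvariant reduction, Frobenius reciprocity, and Shalika) is essentially routine.
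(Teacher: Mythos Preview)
Your proposal follows a fundamentally different strategy from the paper's. The paper does not reduce \emph{down} to $\GL_r$ via coinvariants; it induces \emph{up} to $\GL_n$. Fixing an auxiliary generic irreducible $\tau$ of $\GL_{n-r}(\rk)$, one forms $\rho_s = \ind_{\mathrm P_{r,n-r}}^{\GL_n(\rk)} \sigma \widehat\otimes \tau_s$ and constructs from each $\mu \in \Hom_{\mathrm R_r}(\pi\widehat\otimes\sigma,\chi)$ a zeta integral $\oZ_\mu$ which, for $\Re(s)\gg 0$, defines an element of $\Hom_{\GL_n(\rk)}(\pi\widehat\otimes\rho_s\widehat\otimes\omega, |\det|_\rk^{r+1-n})$. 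A nonvanishing computation makes $\mu\mapsto\oZ_\mu$ injective on any finite-dimensional subspace, and since $\rho_s$ is irreducible for generic $s$, Theorem~B (the $\GL_n\times\GL_n$ multiplicity-one theorem with the Schwartz space $\omega$) bounds the target by $1$.

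Your coinvariant/Kirillov route has genuine gaps beyond the one you flag. First, you invoke the Whittaker functional $\lambda_n$ on $\pi$, so the argument as written applies only to generic $\pi$; Theorem~A is stated for \emph{all} irreducible Casselman--Wallach $\pi$, and the non-generic case is not handled. Second, and more seriously, the direction of your key inclusion is inverted: if $\Theta\colon J\hookrightarrow W$ is a closed embedding, precomposition yields a map $\Hom_{\GL_r}(W\widehat\otimes\sigma,\chi_r)\to\Hom_{\GL_r}(J\widehat\otimes\sigma,\chi_r)$, not the reverse. To obtain your displayed injection you need every equivariant functional on $J\widehat\otimes\sigma$ to \emph{extend} to $W\widehat\otimes\sigma$, which does not follow from $\Theta$ being a closed embedding. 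Third, the Frobenius reciprocity you quote, identifying $\Hom_{\GL_r}(W\widehat\otimes\sigma,\chi_r)$ with $\Hom_{\oN_r(\rk)}(\sigma,\psi_r^{-1})$, holds for the \emph{Schwartz} induction $\ind_{\oN_r(\rk)}^{\GL_r(\rk)}\psi_r$, not for the full moderate-growth space $W$. What you actually need is that $J$ be a \emph{quotient} of the Schwartz induction (an archimedean Kirillov-model statement), which is precisely the hard structural input you have not supplied. The paper's integral construction bypasses all of this by leaning on the already-proved Theorem~B instead of any Kirillov-type structure for $\pi$.
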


Here $\pi\widehat \otimes \sigma$ is viewed as a representation of
$\mathrm R_r$ as before.  Theorem \ref{main} is well known in two
extremal cases: if $r=0$, then $\mathrm R_r=\oN_n(\rk)$, and Theorem
\ref{main} asserts the uniqueness of Whittaker models for
$\GL_n(\rk)$ (see \cite{Sh,CHM}); if $r=n-1$, then $\mathrm
R_r=\GL_{n-1}(\rk)$, and Theorem \ref{main} asserts the multiplicity
one theorem for $\GL_n(\rk)$ (see \cite{AG,AGRS,SZ}).

View $\rk^n$ as the space of column vectors. Write $\omega$ for the
Fr\'{e}chet space of $\C$-valued Schwartz functions on $\rk^n$. It
carries a smooth representation of $\mathrm \GL_n(\rk)$:
\be\label{action}(g.\phi)(v):=\phi(g^{-1}v),\quad g\in
\GL_n(\rk),\,\phi\in \omega,\, v\in \rk^n.\ee
 We shall prove Theorem
\ref{main} by reducing it to the following multiplicity one theorem:
\begin{introtheorem}\label{main2} (cf. \cite[Theorem C]{SZ})
For all irreducible Casselman-Wallach representations of $\pi$ and
$\pi'$ of $\GL_n(\rk)$, and all characters $\chi'$ on $\GL_n(\rk)$,
one has that
\begin{equation}\label{homnn}
  \mathrm{dim}\ \mathrm{Hom}_{\GL_n(\rk)}(\pi\widehat \otimes \pi'\widehat \otimes  \omega,\chi')\le 1.
\end{equation}

\end{introtheorem}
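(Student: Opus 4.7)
My plan is to establish Theorem \ref{main2} via the Gelfand--Kazhdan machinery in the Casselman--Wallach category, following the approach of Aizenbud--Gourevitch and Sun--Zhu; indeed the statement is \cite[Theorem C]{SZ} and the sketch below traces their strategy. The first step is a reduction to $\chi' = 1$: replacing $\pi'$ by $(\chi')^{-1} \otimes \pi'$ preserves irreducibility and the Casselman--Wallach property, so it suffices to bound
\[
 \dim \Hom_{\GL_n(\rk)}(\pi \widehat\otimes \pi' \widehat\otimes \omega, \C) \le 1.
\]

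Next I would translate the Hom-space into a space of equivariant generalized functions. Using the Casselman embedding theorem to realize $\pi$ and $\pi'$ as closed subrepresentations of $C^\infty(\GL_n(\rk))$ and the Schwartz kernel theorem to identify continuous trilinear forms, a $\GL_n(\rk)$-invariant functional on $\pi \widehat\otimes \pi' \widehat\otimes \omega$ corresponds to a tempered generalized function $D$ on $\GL_n(\rk) \times \GL_n(\rk) \times \rk^n$ which is left-equivariant for $\pi \boxtimes \pi'$ on the two copies of $\GL_n(\rk)$ and invariant under the diagonal $\GL_n(\rk)$-action (twisted by the character of $\omega$). The desired dimension bound then follows from the Gelfand--Kazhdan criterion, provided one can exhibit an anti-involution $\theta$ on the ambient variety---the natural candidate being built from transpose $g \mapsto g^T$ together with the swap of the two copies of $\GL_n(\rk)$, and the identity on $\rk^n$---under which every such $D$ is automatically fixed and which at the representation level interchanges $\pi \otimes \pi'$ with $\pi^\vee \otimes (\pi')^\vee$.

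The main obstacle, and the technical heart of the argument, is verifying that every equivariant $D$ is $\theta$-invariant. Stratifying $\GL_n(\rk) \times \GL_n(\rk) \times \rk^n$ by the orbits of the relevant $\GL_n(\rk) \times \GL_n(\rk)$-action, one reads off $\theta$-invariance on the open regular stratum from classical invariant theory for conjugation. The hard case is control on the closed non-generic strata, where $v = 0$ or the matrix coordinate is non-regular; on these loci transpose-invariance is not automatic, and forcing it requires invoking the Aizenbud--Gourevitch theory of nilpotent invariant distributions, Bernstein's localization principle, and wave-front-set estimates, all combined into an inductive argument that eliminates exceptional distributions stratum by stratum. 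This geometric analysis is precisely the content of \cite[Theorem C]{SZ}.
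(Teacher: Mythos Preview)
The paper does not give its own proof of Theorem~\ref{main2}; it simply quotes the result from \cite[Theorem~C]{SZ} and uses it as a black box in the reduction argument of Section~\ref{sec2}. Your proposal is therefore not really competing with any argument in the paper---what you have written is a correct high-level outline of the Sun--Zhu proof itself (Gelfand--Kazhdan criterion, the transpose-based anti-involution, and the hard distributional analysis on the non-regular strata), which is exactly the reference the paper invokes.
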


We remark that Rankin-Selberg method implies that the hom spaces of
\eqref{homnr} and \eqref{homnn} are at least one dimensional when
the representations involved are generic.

The authors thank Dihua Jiang for helpful discussions. The second
named author is supported by NSFC Grant 11222101.

\section{Proof of Theorem \ref{main}}\label{sec2}

 For simplicity, write $\mathrm G_m:=\GL_m(\rk)$ and $\RN_m:=\RN_m(\rk)$ for every non-negative integer $m$. Put
\be\label{pb}
  \mathrm P_{r,n-r}:=\left\{\begin{bmatrix} g & u \\
0 & h \end{bmatrix} \mid  g\in \mathrm G_r, \,h\in \mathrm
G_{n-r},\, u\in \mathrm M_{r,n-r}(\rk)\right\}.
\ee
Fix a generic irreducible Casselman-Wallach representation $\tau$
of $\mathrm G_{n-r}$. For every $s\in \mathbb C$, denote by $\tau_s$
the representation of $\mathrm G_{n-r}$ which has the same
underlying space as that of $\tau$, and has the action
$$\tau_s(g)=|\det(g)|_\rk^{-s} \tau(g),\qquad g\in \mathrm G_{n-r},$$
where ``$\,\abs{\,\cdot\,}_\rk$" denotes the normalized absolute value on
$\rk$.

Form the un-normalized smooth induction
\begin{eqnarray*}
   \rho_s&:=& \mathrm{ind}_{\mathrm P_{r,n-r}}^{\mathrm G_n}\sigma\wo\tau_s \\
   &:=& \{f\in \con{\,^\infty}(\mathrm G_n; \sigma\wo\tau_s)\mid f(pg)=p.f(g),\,p\in \mathrm P_{r,n-r},\,g\in \mathrm G_n\},
\end{eqnarray*}
which is a Casselman-Wallach representation of $\mathrm G_n$ under
right translations. Here $\sigma$ is an  irreducible
Casselman-Wallach representation of $\mathrm G_r$, as in Theorem
\ref{main}, and $\sigma\wo\tau_s$ is viewed as a representation of
$\mathrm P_{r,n-r}$ by the inflation through the obvious homomorphism
$\mathrm P_{r,n-r}\rightarrow \mathrm G_r\times \mathrm G_{n-r}$.

Combining Langlands classification with the result of Speh-Vogan
(\cite{SV}), we have
\begin{prp}\label{prop:3.1}
The representation $\rho_s$ of $\mathrm G_n$ is irreducible except
for a measure zero set of $s\in \mathbb C$.
\end{prp}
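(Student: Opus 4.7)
The strategy is to realise $\rho_s$ as a quotient of an ambient standard module on $\mathrm G_n$ and invoke the irreducibility criterion of Speh-Vogan \cite{SV}.

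By the Langlands classification for $\mathrm G_r$ and $\mathrm G_{n-r}$, write $\sigma$ and $\tau$ as the respective Langlands quotients of standard modules
\[
I(\sigma)=\mathrm{ind}_{Q}^{\mathrm G_r}(\delta_1\wo\cdots\wo\delta_a),\qquad I(\tau)=\mathrm{ind}_{Q'}^{\mathrm G_{n-r}}(\epsilon_1\wo\cdots\wo\epsilon_b),
\]
where each $\delta_i,\epsilon_j$ is essentially square-integrable on a block factor of the Levi of $Q$, $Q'$, with central exponents in the Langlands positive chamber. Put $(\epsilon_j)_s:=\epsilon_j\otimes|\det|_\rk^{-s}$, so that $\tau_s$ is the Langlands quotient of $I(\tau_s)=\mathrm{ind}_{Q'}^{\mathrm G_{n-r}}((\epsilon_1)_s\wo\cdots\wo(\epsilon_b)_s)$. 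By induction in stages through $Q\times Q'\subset\mathrm P_{r,n-r}\subset\mathrm G_n$ and exactness of smooth parabolic induction, parabolic induction of the Langlands surjection $I(\sigma)\wo I(\tau_s)\twoheadrightarrow\sigma\wo\tau_s$ produces a $\mathrm G_n$-equivariant surjection $I_s\twoheadrightarrow\rho_s$, where
\[
I_s:=\mathrm{ind}_{Q\times Q'}^{\mathrm G_n}(\delta_1\wo\cdots\wo\delta_a\wo(\epsilon_1)_s\wo\cdots\wo(\epsilon_b)_s)
\]
is itself a standard module on $\mathrm G_n$ built from essentially square-integrable data.

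Next, the Speh-Vogan theorem \cite{SV} asserts that $I_s$ is irreducible precisely when no two of its building blocks are \emph{linked}, a condition expressible as a finite system of affine equalities in the central exponents of pairs of blocks. The intra-$\delta$ and intra-$\epsilon$ linkages are independent of $s$ and are exactly what is absorbed by the kernel of $I_s\twoheadrightarrow\rho_s$, which is the parabolic induction of $\ker(I(\sigma)\wo I(\tau_s)\twoheadrightarrow\sigma\wo\tau_s)$. On the other hand, each cross-linkage condition between some $\delta_i$ and some $(\epsilon_j)_s$ is a non-trivial affine constraint on $s\in\mathbb C$ and therefore excludes only a discrete subset of $\mathbb C$. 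For $s$ outside the countable union of all such cross-linkage loci, no new reducibility of $I_s$ appears beyond that already absorbed in the kernel, so $\rho_s$ is isomorphic to the Langlands quotient of $I_s$ and hence irreducible.

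The exceptional set of $s$ is therefore a countable subset of $\mathbb C$, of Lebesgue measure zero, which proves the proposition. The main technical step is the Jordan-H\"older identification of the kernel of $I_s\twoheadrightarrow\rho_s$ with the locus of intra-block linkages for generic $s$; this is supplied by the explicit description of composition factors of standard modules on $\mathrm{GL}_n(\rk)$ furnished by the combined Langlands-Speh-Vogan classification.
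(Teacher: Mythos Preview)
Your proposal is correct and follows exactly the route indicated in the paper, which merely cites the Langlands classification together with Speh--Vogan \cite{SV} and gives no further detail. The one step worth making fully precise is the passage from ``no cross-linkages in $I_s$'' to ``$\rho_s$ irreducible''; for $\mathrm{GL}_n$ this amounts to the multiplicativity (product formula) for composition series under parabolic induction in the absence of linkage, which is indeed a consequence of the Speh--Vogan analysis, as you acknowledge in your final paragraph.
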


As in the Theorem \ref{main}, let  $\chi$ be a generic character of
$\mathrm R_r$. In order to prove Theorem \ref{main}, replacing
$\sigma$ by its tensor product with a suitable character, we may
(and do) assume that
\begin{equation}\label{dec}
\textrm{the decent of $\chi$ to $\mathrm G_r\times \RN_{n-r}$ has
the form $1\otimes \psi$,}
 \end{equation}
where $1$ stands for the trivial character on $\mathrm G_r$, and
$\psi$ is a generic unitary character on $\oN_{n-r}$. Fix a non-zero
element
\[
  \lambda\in \mathrm{Hom}_{\oN_{n-r}}(\tau,{\psi^{-1}}).
\]
Define a continuous linear map
\[
 \Lambda: \sigma\wo\tau_s=\sigma\wo\tau\rightarrow \sigma,\qquad
 u\otimes v \mapsto \lambda(v) u  \qquad (s\in \mathbb{C}).
\]

Let $\pi$ be an irreducible Casselman-Wallach representation of
$\mathrm G_n$ as  in Theorem \ref{main},  and let
$$ \la \,, \,\ra_\mu:\pi\times \sigma\rightarrow \mathbb C$$
be a continuous bilinear map which represents an element $\mu\in
\mathrm{Hom}_{\mathrm R_r}(\pi\wo\sigma,\chi)$.

Write $\mathrm e_1, \mathrm e_2,\cdots,\mathrm e_n$ for the standard
basis of $\rk^n$. Note that the vector $\mathrm e_{r+1}$ is fixed by
the group $\RR_r$. Recall the representation $\omega$ of $\mathrm
G_n$ from the Introduction. Using the discussion of \cite[page
533]{Tr}, we identify $\rho_s\wo \omega$ with the space of
$\rho_s$-valued Schwartz functions on $\rk^n$. Then  \eqref{dec}
obviously implies the following
\begin{lemp}\label{inv} For every  $s\in \mathbb C, v\in \pi$ and $\phi\in \rho_s\wo
\omega$, the function
$$g\mapsto \la g.v,\Lambda\left(\phi(g^{-1}\mathrm e_{r+1})(g)\right)\ra_\mu $$
on $\RG_n$ is left $\mathrm R_r$-invariant.
\end{lemp}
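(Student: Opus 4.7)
The plan is to verify $\mathrm R_r$-invariance by a direct equivariance calculation, relying on three ingredients: $\mathrm R_r \subset \mathrm P_{r,n-r}$, the vector $\mathrm e_{r+1}$ is fixed by $\mathrm R_r$, and two character twists by $\psi$ cancel.

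First I would show that $\mathrm R_r$ fixes $\mathrm e_{r+1}$. For any $r_0 = \begin{bmatrix} g_0 & u_0 \\ 0 & h_0 \end{bmatrix}\in \mathrm R_r$, the vector $r_0 \mathrm e_{r+1}$ is the $(r{+}1)$-st column of $r_0$, namely the first column of $u_0$ stacked on top of the first column of $h_0$. The former vanishes by definition of $\mathrm M^0_{r,n-r}$, and the latter is the standard basis vector in $\rk^{n-r}$ since $h_0\in \RN_{n-r}$ is upper unipotent. Hence $r_0 \mathrm e_{r+1} = \mathrm e_{r+1}$, and consequently $(r_0 g)^{-1}\mathrm e_{r+1} = g^{-1}\mathrm e_{r+1}$ for every $g\in \RG_n$.

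Next, since $\mathrm R_r \subset \mathrm P_{r,n-r}$, writing $\Phi := \phi(g^{-1}\mathrm e_{r+1}) \in \rho_s$, the defining transformation law of $\rho_s$ gives $\Phi(r_0 g) = r_0.\Phi(g)$ in $\sigma\wo \tau_s$. Because $\det h_0 = 1$, the $\tau_s$-action of $h_0$ coincides with $\tau(h_0)$, so the relation $\lambda \circ \tau(h_0) = \psi(h_0)^{-1}\lambda$ and the definition of $\Lambda$ yield
\[
 \Lambda\bigl(r_0.\Phi(g)\bigr) \;=\; \psi(h_0)^{-1}\, g_0.\Lambda(\Phi(g)).
\]

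Finally, the $\mathrm R_r$-equivariance of $\mu$ gives
\[
 \bigl\langle (r_0 g).v,\ g_0.\Lambda(\Phi(g))\bigr\rangle_\mu \;=\; \chi(r_0)\,\bigl\langle g.v,\ \Lambda(\Phi(g))\bigr\rangle_\mu,
\]
since $r_0$ acts on $\sigma$ through its image $g_0\in \mathrm G_r$. Under the normalization \eqref{dec}, the character $\chi$ descends to $1 \otimes \psi$, so $\chi(r_0) = \psi(h_0)$. Combining these identities, the factors $\psi(h_0)^{-1}$ and $\psi(h_0)$ cancel and we obtain the desired equality
\[
 \bigl\langle (r_0 g).v,\ \Lambda\bigl(\phi((r_0 g)^{-1}\mathrm e_{r+1})(r_0 g)\bigr)\bigr\rangle_\mu \;=\; \bigl\langle g.v,\ \Lambda\bigl(\phi(g^{-1}\mathrm e_{r+1})(g)\bigr)\bigr\rangle_\mu.
\]

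There is no deep obstacle here; the only delicate point is bookkeeping the two $\psi(h_0)$ twists (one coming from pushing $\tau(h_0)$ through $\lambda$, the other from the definition of $\chi$) and confirming that they cancel. Everything else is formal manipulation using the defining properties of $\rho_s$, $\Lambda$, and $\mu$ set up in the preceding paragraphs.
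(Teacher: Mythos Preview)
Your proof is correct and is precisely the direct equivariance check the paper has in mind; the paper itself does not write out any details, merely asserting that \eqref{dec} ``obviously implies'' the lemma. Your argument supplies those details accurately, including the key observations that $\mathrm R_r$ fixes $\mathrm e_{r+1}$ and that the $\psi(h_0)^{-1}$ coming from pushing $\tau(h_0)$ through $\lambda$ cancels the $\chi(r_0)=\psi(h_0)$ from the equivariance of $\mu$.
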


The group $\mathrm R_r$ is not unimodular in general. Nevertheless,
we still have the following
\begin{lemp}\label{haar} (cf. \cite[Theorem 33D]{Lo}) Up to scalar multiplication, there exists a unique positive   Borel measure $\rd g$ on $\mathrm R_r\backslash \mathrm G_n$ such that
\[
  \int_{\mathrm R_r\backslash \mathrm G_n}\varphi(gg_0)\,\rd g=\abs{\det(g_0)}_{\rk}^{r+1-n} \int_{\mathrm R_r\backslash \mathrm G_n}\varphi(g)\,\rd g,
\]
for all $g_0\in \mathrm G_n$ and all non-negative measurable
function $\varphi$ on $\mathrm R_r\backslash \mathrm G_n$.
\end{lemp}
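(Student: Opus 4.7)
The plan is to reduce the lemma to the classical existence and uniqueness theorem for relatively invariant positive Borel measures on a homogeneous quotient $H\backslash G$, as in the cited reference of Loomis. That theorem states: for a closed subgroup $H$ of a locally compact group $G$ with modular functions $\Delta_H$ and $\Delta_G$, a non-zero positive Borel measure $\rd\mu$ on $H\backslash G$ satisfying the quasi-invariance rule $\int \varphi(xg_0)\,\rd\mu(x)=\chi(g_0)\int \varphi\,\rd\mu$ for a continuous character $\chi\colon G\to \mathbb{R}_{>0}$ exists, and is unique up to positive scalar, if and only if $\chi|_H=\Delta_H\cdot \Delta_G^{-1}|_H$. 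Since $\mathrm G_n=\GL_n(\rk)$ is unimodular, $\Delta_{\mathrm G_n}\equiv 1$, and the lemma reduces to computing $\Delta_{\mathrm R_r}$ and checking that it equals the restriction of the candidate character $\chi(g):=|\det g|_\rk^{r+1-n}$ to $\mathrm R_r$.

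To compute $\Delta_{\mathrm R_r}$, I would exploit the semidirect product decomposition
\[
  \mathrm R_r=(\mathrm G_r\times \oN_{n-r}(\rk))\ltimes U,\qquad U:=\mathrm M^0_{r,n-r},
\]
with the Levi embedded block-diagonally and the abelian normal subgroup $U$ in the upper-right block. The Levi is unimodular (as a product of the reductive $\mathrm G_r$ and the unipotent $\oN_{n-r}(\rk)$), and $U$ is abelian. The standard formula for the modular function of such a semidirect product gives $\Delta_{\mathrm R_r}(g)=|\det\mathrm{Ad}_U(a,h)|_\rk^{-1}$ for $g=\begin{bmatrix} a & u\\ 0 & h\end{bmatrix}\in \mathrm R_r$, where the conjugation action of $(a,h)$ on $U$ is $u\mapsto auh^{-1}$. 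The Jacobian of this action is the one genuine calculation: left multiplication by $a$ rescales each of the $n-r-1$ free columns of $u$ (the first column is zero by definition of $U$) by $a$, contributing $|\det a|_\rk^{n-r-1}$; right multiplication by $h^{-1}$, with $h$ upper-triangular unipotent, preserves $U$ and acts by a unipotent linear transformation on the remaining columns, contributing $1$. Hence $|\det\mathrm{Ad}_U(a,h)|_\rk=|\det a|_\rk^{n-r-1}$ and $\Delta_{\mathrm R_r}(g)=|\det a|_\rk^{r+1-n}$.

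Since $\det\begin{bmatrix} a & u\\ 0 & h\end{bmatrix}=\det a$ (because $h\in\oN_{n-r}(\rk)$ is unipotent), the restriction $\chi|_{\mathrm R_r}$ coincides with $\Delta_{\mathrm R_r}$, so the compatibility is verified and the lemma follows from the general theorem. The whole argument is essentially an application of a standard result; the one point requiring care is the consistent bookkeeping of sign conventions between left and right Haar measures, the modular function, and the quasi-invariance character, which must be matched against the convention in \cite{Lo}.
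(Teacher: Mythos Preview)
Your argument is correct and is exactly what the paper has in mind: the lemma is stated there with only a citation to Loomis's theorem and no proof, so the reduction to the general relative-invariance criterion together with the modular function computation you carry out is precisely the intended justification. Your Jacobian calculation $|\det\mathrm{Ad}_U(a,h)|_\rk=|\det a|_\rk^{\,n-r-1}$ is right, and the caveat about sign conventions is well placed but harmless here.
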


Lemma \ref{inv} allows us to define the following zeta integrals
which play a key role in the proof of Theorem \ref{main}:
\begin{equation}\label{zeta}
\operatorname Z_\mu(v,\phi):=\int_{\mathrm R_r\backslash \mathrm
G_n}\la g.v,\Lambda\left(\phi(g^{-1}\mathrm
e_{r+1})(g)\right)\ra_\mu\, \rd g,
\end{equation}
where $v\in \pi, \phi\in \rho_s\wo \omega$, and $\rd g$ is as in
Lemma \ref{haar}. It follows from Lemma \ref{haar} that this
integral has the following invariance property: \bee
\operatorname Z_\mu(g.v, g.\phi)=\abs{\det(g)}_{\rk}^{r+1-n}
\operatorname Z_\mu(v,\phi),\qquad g\in \RG_n. \eee \vspace{1mm}

The remainder of this section is devoted to the proof of Theorem A.
To this end, let us first state two results about the integral
$\oZ_\mu(v,\phi)$, which will be proved in Sections \ref{sec3} and
\ref{sec4} respectively.
\begin{prp}\label{prop1} If $\mu\ne 0$, then for every $s\in \mathbb C$, there exist $v\in \pi$ and $\phi\in \rho_s\wo \omega$
 such that the integral $\operatorname Z_\mu(v,\phi)$ is absolutely convergent and non-zero.
\end{prp}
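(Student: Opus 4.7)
The plan is to construct test vectors $v$ and $\phi$ so that the integrand is concentrated near the coset of the identity in $\mathrm R_r\backslash \mathrm G_n$, with non-zero value there, reducing the non-vanishing claim to $\mu\neq 0$ itself.

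Since $\mu\neq 0$, I would first fix $v_0\in \pi$ and $w_0\in \sigma$ with $\la v_0,w_0\ra_\mu\neq 0$, and pick $v_0'\in \tau$ with $\lambda(v_0')=1$. Then I would set $v:=v_0$ and take $\phi:=\alpha\cdot\phi_0$, where $\alpha\in C_c^\infty(\rk^n)$ has support in a small neighborhood of $\mathrm e_{r+1}$ avoiding the origin and satisfies $\alpha(\mathrm e_{r+1})=1$, and $\phi_0\in\rho_s$ is built by first prescribing $\phi_0(e)=w_0\otimes v_0'$ on a local cross section through the base point of the compact quotient $\mathrm P_{r,n-r}\backslash \mathrm G_n$, extending by $\mathrm P_{r,n-r}$-equivariance, and then cutting off by a bump function so that $\phi_0$ is supported in $\mathrm P_{r,n-r}\cdot U$ for a small open neighborhood $U$ of $e$ in $\mathrm G_n$. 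By construction $\phi\in\rho_s\wo\omega$, the integrand at $g=e$ equals $\alpha(\mathrm e_{r+1})\lambda(v_0')\la v_0,w_0\ra_\mu=\la v_0,w_0\ra_\mu\neq 0$, and the support of the integrand is contained in the set where $g^{-1}\mathrm e_{r+1}$ is close to $\mathrm e_{r+1}$ and $g$ is close to $\mathrm P_{r,n-r}$.

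The main technical step will be establishing absolute convergence. Via the fibration $\mathrm R_r\backslash \mathrm G_n\to \rk^n\setminus\{0\}$ with fiber $\mathrm R_r\backslash H$, where $H$ is the stabilizer of $\mathrm e_{r+1}$ in $\mathrm G_n$, the compact support of $\alpha$ localizes the base direction. Along the non-compact fiber, the constraint that $g$ is close to $\mathrm P_{r,n-r}$ restricts the integration essentially to $\mathrm R_r\backslash(H\cap \mathrm P_{r,n-r})\cong \RN_{n-r-1}\backslash \mathrm G_{n-r-1}$; on this slice the $\mathrm P_{r,n-r}$-equivariance of $\phi_0$ and the form of $\Lambda$ produce a factor consisting of a Whittaker function $W_{v_0'}$ of $\tau$ multiplied by $|\det F|^{-s}$ (where $F$ parameterizes $\RN_{n-r-1}\backslash \mathrm G_{n-r-1}$), paired against a matrix coefficient of $\pi$. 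Absolute convergence then follows, for $\Re(s)$ large, from the moderate growth of this matrix coefficient combined with the rapid decay of $W_{v_0'}$ on the positive Weyl chamber of $\mathrm G_{n-r-1}$, which is standard for generic $\tau$. For arbitrary $s\in\C$, I would refine the choice of $v_0'$ (using the richness of the Whittaker model of $\tau$) or apply Dixmier-Malliavin smoothing to $\phi_0$.

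Non-vanishing will then follow by a continuity argument: shrinking the supports of $\alpha$ and $\phi_0$ concentrates the integrand near the identity coset where it stays close to the nonzero constant $\la v_0,w_0\ra_\mu$, so the integral becomes approximately a nonzero multiple of $\la v_0,w_0\ra_\mu$. The hard part will be the absolute convergence analysis along the non-compact direction $\RN_{n-r-1}\backslash \mathrm G_{n-r-1}$, which relies crucially on the Whittaker decay estimates provided by the genericity of $\tau$ and the Casselman-Wallach growth of matrix coefficients of $\pi$.
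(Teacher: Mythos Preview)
Your geometric analysis is correct: after localizing with $\alpha$ (which pins down the $\RU_r$ and $\RQ'_{n-r}$ coordinates) and with the support of $\phi_0$ in $\RP_{r,n-r}\backslash\RG_n$ (which pins down $\bar{\RU}_{r,n-r}$), the only remaining non-compact direction in $\RR_r\backslash\RG_n$ is $\RN'_{n-r}\backslash\RP'_{n-r}\cong \RN_{n-r-1}\backslash\RG_{n-r-1}$, and there the integrand is governed by the Whittaker function $p'\mapsto\lambda(p'.v_0')$. The gap is in how you treat this direction. Your primary argument---rapid decay of $W_{v_0'}$ against moderate growth of matrix coefficients---only yields convergence for $\Re s\gg 0$, not for every $s\in\C$ as the proposition requires. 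Dixmier--Malliavin does not help: smoothing $\phi_0$ by convolution does not alter the value $\phi_0(e)=w_0\otimes v_0'$, hence does nothing for the $p'$-decay.

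Your phrase ``richness of the Whittaker model'' is the right fix, but must be made precise. The paper invokes the Kirillov-model result of \cite[Section~3]{JS1}: every smooth function $W$ on $\RP'_{n-r}$ satisfying $W(np')=\psi(n)^{-1}W(p')$ and having \emph{compact support} modulo $\RN'_{n-r}$ arises as $p'\mapsto\lambda(p'.v_\tau)$ for some $v_\tau\in\tau$. Choosing such a $v_\tau$ makes the Whittaker factor compactly supported along the $p'$-direction, so the entire integrand becomes compactly supported on $\RR_r\backslash\RG_n$; convergence is then trivial for all $s$, with no estimates needed, and non-vanishing follows by the shrinking argument you describe.

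The paper also packages the construction differently. Rather than a pure tensor $\phi_0\otimes\alpha$, it builds test vectors through an explicit injection
\[
\eta:\ind_{\RP_{r,n-r}\cap\RP^\circ_{r+1}}^{\RG_n}\sigma\widehat\otimes\tau_s\hookrightarrow\rho_s\widehat\otimes\omega,\qquad (\eta(\phi)(g^{-1}\mathrm e_{r+1}))(g)=\phi(g),
\]
together with a measure decomposition $\RU_r\times(\RN'_{n-r}\backslash\RP'_{n-r})\times\RQ'_{n-r}\times\bar{\RU}_{r,n-r}\hookrightarrow\RR_r\backslash\RG_n$, which reduces $\oZ_\mu(v_\pi,\eta(\phi))$ to the visibly compactly supported integral
\[
\int \la up'q'\bar u.v_\pi,\,v_\sigma\ra_\mu\,\lambda(p'.v_\tau)\,\phi_0(u,q',\bar u)\,\rd u\,\rd p'\,\rd q'\,\rd\bar u.
\]
Your pure-tensor approach can be made to work once the Jacquet--Shalika input is used, but the Schwartz-induction framework makes the compact support of the integrand transparent from the outset and avoids any convergence analysis.
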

\begin{prp}\label{prop2} There is a real constant $c_\mu$, depending on $\pi$, $\sigma$, $\chi$, $\tau$ and $\mu$, such that
for every $s\in \mathbb C$ whose real part $>c_\mu$, the integrals
in \eqref{zeta} are absolutely convergent, and produce a continuous
linear functional in  $\mathrm{Hom}_{\mathrm G_n}(\pi\wo \rho_s\wo
\omega,\abs{\det}_\rk^{r+1-n})$.
\end{prp}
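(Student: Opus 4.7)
The plan is to prove absolute convergence of \eqref{zeta} by combining moderate growth of $\pi$, $\sigma$, $\tau$, the Schwartz decay of $\omega$, and the decay supplied by the $|\det|_\rk^{-s}$-twist in $\tau_s$; once this is in hand, both the $\mathrm G_n$-equivariance and continuity of $\oZ_\mu$ follow routinely. First I would bound the integrand pointwise: continuity of $\la\cdot,\cdot\ra_\mu$ and of $\Lambda$, together with moderate growth of $\pi$, produce an estimate of the form $|\la g.v,\Lambda(x)\ra_\mu|\le(1+\|g\|)^{N_\pi}\,q_\pi(v)\,q_{\sigma\wo\tau}(x)$ for suitable continuous seminorms and some integer $N_\pi\ge 0$. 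Next I would apply the Iwasawa decomposition $\mathrm G_n=\mathrm P_{r,n-r}K$ with $K$ a maximal compact subgroup: writing $g=pk$ with $p=\begin{bmatrix} g'&u'\\0&h'\end{bmatrix}\in\mathrm P_{r,n-r}$ and $k\in K$, the defining property of $\rho_s$ gives $\phi(g^{-1}\mathrm e_{r+1})(g)=p\cdot\phi(g^{-1}\mathrm e_{r+1})(k)$ inside $\sigma\wo\tau_s=\sigma\wo\tau$, introducing a factor $|\det h'|_\rk^{-\Re(s)}$ together with polynomial factors in $\|g'\|,\|h'\|$ coming from moderate growth of $\sigma$ and $\tau$.

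Next I would analyse $\mathrm R_r\backslash\mathrm P_{r,n-r}$ explicitly. Parametrising $\mathrm P_{r,n-r}$ by triples $(g,u,h)\in\mathrm G_r\times\mathrm M_{r,n-r}(\rk)\times\mathrm G_{n-r}$, the left $\mathrm R_r$-action kills the $\mathrm G_r$-factor, translates $u$ by the subspace $\mathrm M^0_{r,n-r}\cdot h\subseteq\mathrm M_{r,n-r}(\rk)$, and left-multiplies $h$ by $\RN_{n-r}$. Thus, up to the compact factor $\mathrm P_{r,n-r}\backslash\mathrm G_n$, the space $\mathrm R_r\backslash\mathrm G_n$ is captured by a product of $\rk^r$ and $\RN_{n-r}\backslash\mathrm G_{n-r}$, the first factor encoding the top $r$ coordinates of $g^{-1}\mathrm e_{r+1}$. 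On this reduced domain the Schwartz decay of $\phi$ in its $\rk^n$-argument controls the $\rk^r$-integration, while on $\RN_{n-r}\backslash\mathrm G_{n-r}$ the moderate-growth polynomials are dominated by $|\det h|_\rk^{-\Re(s)}$ once $\Re(s)$ exceeds some constant $c_\mu$ depending on $\pi,\sigma,\chi,\tau,\mu$, in the spirit of the classical Rankin-Selberg convergence estimate. The main obstacle is verifying these bounds uniformly across $\mathrm R_r\backslash\mathrm G_n$: in regions where $g^{-1}\mathrm e_{r+1}$ stays bounded the Schwartz decay in the $\rk^n$-variable is inactive, so the required decay must come entirely from the competition between moderate growth and the $|\det h|_\rk^{-s}$-twist.

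Finally, once absolute convergence holds, the substitution $g\mapsto gg_0$ in \eqref{zeta} together with Lemma \ref{haar} immediately yields $\oZ_\mu(g_0.v,g_0.\phi)=|\det g_0|_\rk^{r+1-n}\oZ_\mu(v,\phi)$, while continuity of $\oZ_\mu$ as a linear functional on $\pi\wo\rho_s\wo\omega$ is immediate from the same seminorm bounds that established convergence.
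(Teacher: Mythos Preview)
Your outline has a genuine gap at the most delicate point. You claim that on $\RN_{n-r}\backslash\mathrm G_{n-r}$ the moderate-growth polynomials in $\|h\|$ are dominated by $|\det h|_\rk^{-\Re(s)}$ for $\Re(s)$ large. This is false: take $h=\diag(t,t^{-1},1,\ldots,1)$ with $t\to\infty$; then $|\det h|_\rk=1$ while $\|h\|\to\infty$, so no power of $|\det h|_\rk^{-1}$ gives any decay along this direction. More generally, in Iwasawa coordinates $a_\ft=\diag(t_1,\ldots,t_{n-r})$ on $\RN_{n-r}\backslash\mathrm G_{n-r}$, the twist contributes only $\Pi(\ft)^{-\Re(s)}=(\prod t_i)^{-\Re(s)}$, which cannot control polynomial growth in the ratios $t_i/t_{i+1}$.

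The missing ingredient is that $\Lambda$ is not merely continuous: it is built from the \emph{Whittaker} functional $\lambda$ on $\tau$, and Whittaker functions have rapid decay along the torus in the positive Weyl chamber. Concretely, the paper invokes (from \cite{JSZ}) an estimate of the form
\[
|\Lambda(a_\ft.u)|_\sigma\le \xi(\ft)^{-N}\cdot\|\ft\|^{c_2}\cdot |u|_{\sigma\wo\tau,N},\qquad \xi(\ft)=\prod_{i=1}^{n-r-1}\Bigl(1+\frac{t_i}{t_{i+1}}\Bigr),
\]
valid for every $N$. This $\xi(\ft)^{-N}$ supplies exactly the decay in the directions where $|\det h|^{-s}$ is useless. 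The Schwartz decay of $\omega$ contributes a further factor $(1+t_1^{-1})^{-N}$ (coming from the $(r+1)$-th coordinate of $g^{-1}\mathrm e_{r+1}$), and the three factors $\Pi(\ft)^{-\Re(s)}$, $\xi(\ft)^{-N}$, $(1+t_1^{-1})^{-N}$ together dominate $\|\ft\|^{c_\mu}$ on $(\BR_+^\times)^{n-r}$ once $\Re(s)>c_\mu$ and $N$ is large. Your treatment of the $\rk^r$-integration via Schwartz decay and of the equivariance/continuity once convergence is established is fine; but without the Whittaker decay the integral simply diverges.
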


We are now in a position to complete the proof of Theorem
\ref{main}. Let  $F$ be a finite dimensional subspace of
$\mathrm{Hom}_{\mathrm R_r}(\pi\widehat{\otimes}\sigma, \chi)$. By
Proposition \ref{prop1} and Proposition \ref{prop2}, we have an
injective linear map
\begin{equation}\label{zetain}
F\rightarrow \mathrm{Hom}_{\mathrm
G_n}(\pi\widehat{\otimes}\rho_s\widehat{\otimes}\omega,
\abs{\det}_\rk^{r+1-n}),\quad \mu\mapsto \oZ_\mu,
\end{equation}
for all $s\in \mathbb{C}$ with sufficiently large real part.  By
Proposition \ref{prop:3.1} and Theorem \ref{main2}, the hom space in
\eqref{zetain} is at most one dimensional  except
for a measure zero set of $s\in \mathbb C$. Therefore $F$ is  at most
one dimensional. This finishes the proof of Theorem \ref{main}.

\section{Preliminaries on Schwartz inductions}

For the proof of Proposition \ref{prop1}, we recall in this section some properties of Schwartz inductions (\cite[Section 2]{du}). We work in the setting of Nash manifolds and Nash groups. The reader is referred to \cite{Shi} or \cite{Sun2} for the notions of Nash manifolds, Nash maps, Nash submanifolds, and the related notion of semialgebraic sets.

By a \emph{Nash group}, we mean a group which is
simultaneously a Nash manifold so that all group operations (the
multiplication and the inversion) are Nash maps. Every semialgebraic subgroup
of a Nash group is automatically closed and is called a \emph{Nash subgroup}. Every Nash subgroup is canonically a Nash group.

An action of a Nash group $G$ on a Nash manifold $M$ is called a \emph{Nash action} if the action map $G\times M\rightarrow M$ is a Nash map. Likewise, a finite dimensional real representation $V_\R$ of a Nash group $G$ is called a \emph{Nash representation} if the action map $G\times
V_\R\rightarrow V_\R$ is a Nash map. A Nash group is said to be \emph{almost
linear} if it admits a Nash representation with finite kernel. Structures and basic properties of almost linear Nash groups are studied in detail in \cite{Sun2}. Recall that a Nash manifold is said to be \emph{affine} if it is Nash
diffeomorphic to a Nash submanifold of some $\R^n$. All almost linear Nash groups are affine as Nash manifolds.

Let $M$ be an affine Nash manifold. For each complex Fr\'echet space $V_0$, a $V_0$-valued smooth function $f\in \con^{\,\infty}(M; V_0)$ is said to be
\emph{Schwartz} if
\[
  |f|_{D,\nu}:=\sup_{x\in M} |(D f)(x)|_\nu<\infty
\]
for all Nash differential operators $D$ on $M$, and all continuous
seminorms $|\cdot|_\nu$ on $V_0$. Recall that a differential operator $D$
on $M$ is said to be Nash if $D\varphi$ is a Nash function whenever
$\varphi$ is a ($\C$-valued) Nash function on $M$. Denote by
$\con^{\,\varsigma}(M; V_0)\subset \con^{\,\infty}(M; V_0)$ the subspace of all Schwartz
functions. Then both $\con^{\,\varsigma}(M; V_0)$ and $\con^{\,\infty}(M; V_0)$ are
naturally Fr\'{e}chet spaces, and the inclusion map $\con^{\,\varsigma}(M;
V_0)\hookrightarrow \con^{\,\infty}(M; V_0)$ is continuous. Furthermore, we have
that (cf. \cite[page 533]{Tr})
\[\con^{\,\varsigma}(M; V_0)=\con^{\,\varsigma}(M)\widehat \otimes  V_0\qquad \text{and}\qquad
\con^{\,\infty}(M; V_0)=\con^{\,\infty}(M)\widehat \otimes  V_0,\]
where $\con^{\,\varsigma}(M):=\con^{\,\varsigma}(M;\C)$ and  $\con^{\,\infty}(M):=\con^{\,\infty}(M;\C)$.

Let $G$ be an almost linear Nash group, $S$ a Nash subgroup of $G$, and let $V_0$ be a smooth Fr\'{e}chet
representation of $S$ of moderate growth  (for the usual notion of  smooth Fr\'{e}chet
representation of moderate growth, see \cite[Definition 1.4.1]{du} or \cite[Section 2]{S}, for example).  We define the un-normalized Schwartz induction $\ind_S^G\, V_0$ to be the image of the following continuous linear map:
\begin{equation*}
  \begin{array}{rcl}
  i_{S, V_0}: \con^{\,\varsigma}(G; V_0)&\rightarrow &\con^{\,\infty}(G; V_0),\\
   f&\mapsto &\left(\, g\mapsto \int_S s.f(s^{-1}g)\,\rd s\,\right),
  \end{array}
\end{equation*}
where $\rd s$ is a left invariant Haar measure on $S$.
Under the quotient topology of $\con^{\,\varsigma}(G; V_0)$ and under right translations, this is a smooth Fr\'{e}chet   representation of $G$ of moderate growth.

We will use the following basic properties of Schwartz inductions for almost linear Nash groups.
\begin{lem}\label{Schwartz} Let $G$ be an almost linear Nash group, $S$ a Nash subgroup of $G$, and let $V_0$ be a smooth Fr\'{e}chet
representation of $S$ of moderate growth.

(a). For each Nash subgroup $S'$ of $G$ containing $S$, the map
\[
  \begin{array}{rcl}
\ind^G_S\, V_0&\rightarrow &\ind^G_{S'}\ind^{S'}_S V_0,\smallskip \\
f&\mapsto&  \left(g\mapsto (s'\mapsto f(s'g))\right)
\end{array}
\]
is well-defined and is an isomorphism of representations of $G$.

(b). For each Nash subgroup $T$ of $G$ such that $ST$ is open in $G$, the map
\[
  \begin{array}{rcl}
\ind_{S\,\cap\, T}^T (V_0|_{S\,\cap \,T})&\rightarrow &(\ind_{S}^G \,V_0)|_T, \smallskip \\
   \phi&\mapsto &\left( g\mapsto \begin{cases} s.\phi(t),\ &\text{if}\ g=st\in S\,T;\\
     0,\ &\text{otherwise}
                               \end{cases}
                               \right)
  \end{array}
\]
is well-defined and is an injective homomorphism of representations
of $T$.

(c). Let $V$ be a smooth Fr\'echet representation of $G$ of moderate growth, and let $G\times M\rightarrow M$ be a transitive Nash action of $G$ on an affine Nash manifold $M$.
Assume that $V$ is nuclear as a Fr\'echet space, and $S$ equals the stabilizer in $G$ of a point $x_0\in M$. Then the map
\[
  \begin{array}{rcl}
    \ind_S^G (V|_S) &\rightarrow & \con^{\,\varsigma}(M;V),\\
       \phi&\mapsto &(g.x_0\mapsto g.\phi(g^{-1}))
  \end{array}
\]
is well-defined and is an isomorphism of representations of $G$.
Here the action of $G$ on $\con^{\,\varsigma}(M; V)$ is given by
\[ (g.\psi)(x):=g.(\psi(g^{-1}.x)),\qquad \psi\in \con^{\,\varsigma}(M;V), \,g\in G,\, x\in M.\]

(d). Let $f\in \con^{\,\infty}(G;V_0)$. If
 \[
   f(sg)=s.f(g),\quad s\in S, \,g\in G,
 \]
and $f$ is compactly supported modulo $S$, then $f\in \ind_S^G V_0$.
\end{lem}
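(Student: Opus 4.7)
The plan is to prove Lemma \ref{Schwartz} by deducing each of the four parts from standard manipulations with the defining surjection $i_{S,V_0} \colon \con^{\,\varsigma}(G; V_0) \twoheadrightarrow \ind_S^G V_0$, together with the identification $\con^{\,\varsigma}(M; V_0) = \con^{\,\varsigma}(M) \wo V_0$ valid for affine Nash $M$ and nuclear Fr\'echet $V_0$. In every case the verification splits into three routine steps: the stated formula gives a well-defined continuous linear map, it intertwines the relevant group actions, and it is injective (or bijective as claimed).

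For (a), the strategy is Fubini: after fixing compatible left Haar measures on $S$, $S'$ and $S'/S$, one factors the averaging operator $i_{S',V_0}$ as the composition of averaging first over $S$, producing a section valued in $\ind_S^{S'} V_0$, and then averaging over $S'/S$. The claimed isomorphism is precisely the bookkeeping that records this factorization, and the reverse map is the unique $G$-intertwining operator whose composition with $i_{S', V_0}$ equals $i_{S, V_0}$. For (b), openness of $ST$ in $G$ together with the Nash inverse function theorem gives local Nash sections of the multiplication map $S \times T \to ST$, so extension by zero defines a continuous map $\con^{\,\varsigma}(T; V_0) \to \con^{\,\varsigma}(G; V_0)$; averaging such an extension over $S$ produces the formula in the statement, and since the restriction back to $T$ is the identity the map is automatically injective.

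For (c), the orbit map $g \mapsto g.x_0$ identifies $M$ with the Nash quotient $G/S$. Transport gives the correspondence $\phi \leftrightarrow \Phi$ with $\Phi(g.x_0) = g.\phi(g^{-1})$, whose inverse is $\Phi \mapsto (g \mapsto g.\Phi(g^{-1}.x_0))$; well-definedness in the forward direction uses $\phi(sg) = s.\phi(g)$, while in the reverse direction it uses $S = \mathrm{Stab}(x_0)$. The matching of the two Schwartz conditions follows from $\con^{\,\varsigma}(M; V) = \con^{\,\varsigma}(M) \wo V$ and the nuclearity of $V$, and the $G$-equivariance is a direct computation from the definition of the $G$-action on $\con^{\,\varsigma}(M; V)$. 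For (d), I would pick a smooth non-negative compactly supported function $\eta$ on $G$ with $\int_S \eta(s^{-1}g)\,\rd s = 1$ for every $g \in \mathrm{supp}(f)$; such an $\eta$ is constructed by a cut-off and normalization argument on $S \backslash G$, using that $\mathrm{supp}(f)$ has compact image there. Setting $h(g) := \eta(g) f(g)$ produces a compactly supported smooth, hence Schwartz, function on $G$, and a short calculation exploiting $f(sg) = s.f(g)$ gives $i_{S, V_0}(h) = f$.

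The main technical obstacle, recurring in (a), (b) and (d), is producing the auxiliary Nash data -- compatible Haar splittings on $S \subset S'$, Nash local sections of $S \times T \to ST$, and compactly supported cut-offs on $S \backslash G$ -- while preserving the Schwartz regularity. This is exactly where the hypothesis that $G$ is almost linear and all relevant subgroups are Nash enters, via the structure theory of affine Nash manifolds and their quotients developed in \cite{Sun2}.
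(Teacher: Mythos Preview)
The paper does not give a self-contained proof of this lemma; its entire argument is to cite \cite[Lemma 2.1.6]{du} for (a), \cite[Section 2.2]{du} for (b), \cite[Lemma 3.2]{LS} for (c), and \cite[Lemma 3.1]{LS} for (d). Your sketches for (a), (c), and (d) --- induction in stages via Fubini, the orbit-map identification combined with the tensor identity $\con^{\,\varsigma}(M;V)=\con^{\,\varsigma}(M)\wo V$, and the cut-off/normalization trick to exhibit a compactly supported preimage under $i_{S,V_0}$ --- are the standard arguments and agree with what one finds in those references.

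Your sketch of (b) has one genuine imprecision worth fixing. You assert that ``extension by zero defines a continuous map $\con^{\,\varsigma}(T; V_0) \to \con^{\,\varsigma}(G; V_0)$'', but $T$ is in general a closed, not open, Nash subgroup of $G$, so literal extension by zero from $T$ does not even yield a smooth function on $G$, and the local Nash sections of $S\times T\to ST$ you invoke do not by themselves produce such an extension. The correct formulation is on the quotient level: openness of $ST$ in $G$ means that the natural map $(S\cap T)\backslash T \to S\backslash G$ is an open Nash embedding, and it is extension by zero of Schwartz sections across \emph{that} open inclusion which produces the injection $\ind_{S\cap T}^T(V_0|_{S\cap T})\hookrightarrow (\ind_S^G V_0)|_T$. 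That extension-by-zero step for Schwartz functions on an open Nash submanifold is itself a nontrivial fact and is the substance of du Cloux's argument in \cite[Section 2.2]{du}; your closing paragraph correctly anticipates that this is where the Nash hypotheses do their work.
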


\begin{proof} Part (a) of the lemma is \cite[Lemma 2.1.6]{du}, and part (b)
follows by the arguments of \cite[Section 2.2]{du}. Part (c) is a special case of \cite[Lemma 3.2]{LS}, and part (d) is \cite[Lemma 3.1]{LS}.
\end{proof}

\section{Proof of Proposition \ref{prop1}}\label{sec3}

We continue with the notation of Section \ref{sec2}. We first
present a measure decomposition for the measure $\rd g$ given in
Lemma \ref{haar}. For every $m\geq 1$, write
\be\label{mirop}
  \mathrm P_m:=\left\{\begin{bmatrix}1_1 & * \\ 0 & *\end{bmatrix}\in \mathrm G_m \right\} \quad \textrm{and}\quad  \mathrm Q_m:=\left\{\begin{bmatrix}* & 0 \\ * & 1_{m-1}\end{bmatrix}\in \mathrm G_m \right\}.
\ee
Respectively write $\mathrm G_{n-r}'$, $\mathrm P_{n-r}'$,
$\mathrm Q_{n-r}'$ and $\mathrm N_{n-r}'$ for the image of $\mathrm
G_{n-r}$, ${\mathrm P_{n-r}}$, $\mathrm Q_{n-r}$ and $\mathrm
N_{n-r}$, under the embedding
\[
  \mathrm G_{n-r}\rightarrow \mathrm G_n,\quad g\mapsto \begin{bmatrix}1_r & 0 \\ 0 & g\end{bmatrix}.
\]
Put
\begin{equation}\label{urdef}
  \mathrm U_{r}:=\left\{\begin{bmatrix}1_r & u \\ 0 & 1_{n-r}\end{bmatrix}|\,
u=\begin{bmatrix}\ast & 0 &\cdots &0  \\ \vdots & \vdots & &\vdots\\
\ast & 0 &\cdots &0\end{bmatrix}\right\}
\end{equation}
and
\[
 \bar{\mathrm U}_{r,n-r}:=\left\{\begin{bmatrix}1_r & 0 \\ * &
1_{n-r}\end{bmatrix}\right\}.
\]

\begin{lem}\label{decm}
 The multiplication map
\begin{equation}\label{mul}
   \mathrm  U_r\times (\oN'_{n-r}\backslash \mathrm P_{n-r}')\times \mathrm Q_{n-r}'\times \bar{\mathrm U}_{r,n-r}\rightarrow \mathrm R_r\backslash \mathrm G_n
\end{equation}
is a well-defined open embedding, and its image has full measure in
$\mathrm R_r\backslash \mathrm G_n$. Moreover, the restriction of
the measure $\rd g$ (see Lemma \ref{haar}) through the embedding
\eqref{mul} has the form $\rd u\otimes \rd p' \otimes \rd q'\otimes
\rd \bar u$,  where   $\rd u$, $\rd p'$, $\rd q'$ and $\rd \bar u$
are positive Borel measures on  $\mathrm  U_r$,
$\oN'_{n-r}\backslash \mathrm P_{n-r}'$, $\mathrm Q'_{n-r}$ and
$\bar{\mathrm U}_{r,n-r}$, respectively.
\end{lem}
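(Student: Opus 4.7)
The plan is to first establish the manifold-theoretic decomposition via a chain of Bruhat-like factorizations, and then use the uniqueness in Lemma \ref{haar} to identify the measure as a product.

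For well-definedness through the $\oN'_{n-r}$-quotient, I would verify that $u n u^{-1}\in \mathrm R_r$ for every $u\in \mathrm U_r$ and $n\in \oN'_{n-r}$. A direct block computation gives $u n u^{-1} = \begin{bmatrix} 1_r & u_0(N - 1_{n-r}) \\ 0 & N \end{bmatrix}$, where $u_0$ is the $\mathrm U_r$-parameter (supported in its first column) and $N\in \oN_{n-r}$ has first column $\mathrm e_1$; hence $u_0(N - 1_{n-r})$ has zero first column and lies in $\mathrm M^0_{r, n-r}$, placing the commutator in $\mathrm R_r$. Thus replacing $p'$ by $np'$ changes $up'q'\bar u$ by a left factor in $\mathrm R_r$.

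For injectivity and openness with full-measure image, I would use three nested decompositions: (i) $\mathrm G_n \supset \mathrm P_{r,n-r}\,\bar{\mathrm U}_{r,n-r}$ is the open big cell with unique factorization and of full measure; (ii) inside $\mathrm P_{r,n-r}$, decomposing $p = \begin{bmatrix} A & B \\ 0 & D \end{bmatrix}$ (write $D = N'D'$ with $N'\in \oN_{n-r}$; the $\mathrm U_r$-parameter $u_0 = A^{-1}(BD'^{-1})_{\bullet,1}$ is canonically determined since the first column of $n^{-1}$ is $\mathrm e_1$ for $n\in \oN_{n-r}$), yielding a bijection $\mathrm R_r \backslash \mathrm P_{r,n-r} \cong \mathrm U_r \times (\oN'_{n-r} \backslash \mathrm G'_{n-r})$; (iii) $\mathrm G'_{n-r} \supset \mathrm P'_{n-r}\,\mathrm Q'_{n-r}$ is the open dense big cell with unique factorization, and since $\oN'_{n-r}\subset \mathrm P'_{n-r}$, the subspace $(\oN'_{n-r}\backslash \mathrm P'_{n-r}) \times \mathrm Q'_{n-r}$ embeds as an open dense, full-measure subset of $\oN'_{n-r}\backslash \mathrm G'_{n-r}$. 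Concatenating the three decompositions gives the stated open embedding.

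For the measure part, I would equip each factor with a natural positive Borel measure: Haar/Lebesgue measure on the abelian-unipotent factors $\mathrm U_r$, $\mathrm Q'_{n-r}$, $\bar{\mathrm U}_{r,n-r}$, and the right-$\mathrm P'_{n-r}$-invariant quotient measure on $\oN'_{n-r}\backslash \mathrm P'_{n-r}$ (which exists because $\oN_{n-r}$ is unimodular and the modular character of the mirabolic $\mathrm P_{n-r}$ is trivial on $\oN_{n-r}$). Pushing the product through the open embedding yields a positive Borel measure on $\mathrm R_r \backslash \mathrm G_n$, and by the uniqueness in Lemma \ref{haar} it will coincide with $\rd g$ up to a positive scalar once one checks the transformation rule $\int \varphi(gg_0)\,\rd g = \abs{\det g_0}_\rk^{r+1-n}\int\varphi(g)\,\rd g$. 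This last step is the main obstacle: rather than checking it for all $g_0\in \mathrm G_n$, I would verify it only on a generating family — $\bar{\mathrm U}_{r,n-r}$-translations, $\mathrm Q'_{n-r}$-translations, and translations by a torus complementary to the one contained in $\mathrm R_r$ — reducing the computation to combining the modular characters of $\mathrm P_{r,n-r}$ and $\mathrm P'_{n-r}$, which I expect to yield precisely $\abs{\det g_0}_\rk^{r+1-n}$.
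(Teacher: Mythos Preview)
Your three-step factorization --- the big cell $\mathrm P_{r,n-r}\,\bar{\mathrm U}_{r,n-r}$, then $\mathrm R_r\backslash \mathrm P_{r,n-r}\cong \mathrm U_r\times(\oN'_{n-r}\backslash \mathrm G'_{n-r})$ via the commutator check $u\,\oN'_{n-r}\,u^{-1}\subset\mathrm R_r$, then the cell $\mathrm P'_{n-r}\,\mathrm Q'_{n-r}$ inside $\mathrm G'_{n-r}$ --- is exactly the decomposition the paper uses, so the manifold-theoretic part of your argument is correct and matches the paper. For the measure, however, your plan diverges and has a gap: you propose to build a product measure on the image and then invoke the uniqueness clause of Lemma~\ref{haar} after verifying the transformation rule only on the ``generating family'' $\bar{\mathrm U}_{r,n-r}$, $\mathrm Q'_{n-r}$, and a torus complementary to the one in $\mathrm R_r$. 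Every element of that family is lower-triangular in $\mathrm G_n$, so the family does not generate $\mathrm G_n$; verifying the rule on it is not sufficient to appeal to uniqueness. One can of course enlarge the family, but then the modular-character bookkeeping you were hoping to avoid becomes substantial.

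The paper sidesteps this by running the measure argument in the opposite direction. Rather than constructing a candidate product measure and then checking the global transformation rule, it starts from the given $\rd g$ and uses its relative-invariance to split off one factor at each stage of your same three-step decomposition. Right-$\bar{\mathrm U}_{r,n-r}$-invariance on the first cell forces $\rd g=\rd p\otimes\rd\bar u$; the inherited right-$\mathrm G'_{n-r}$-relative-invariance of $\rd p$ then forces $\rd p=\rd u\otimes\rd g'$ on $\mathrm U_r\times(\oN'_{n-r}\backslash \mathrm G'_{n-r})$; and finally the relative-invariance of $\rd g'$ splits it as $\rd p'\otimes\rd q'$. No global transformation check and no appeal to uniqueness are needed --- the product structure falls out directly from the invariance already encoded in $\rd g$.
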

\begin{proof}
Note that the multiplication map
\begin{equation}\label{mul2}
   (\mathrm R_r\backslash \mathrm P_{r, n-r})\times \bar{\mathrm U}_{r,n-r} \rightarrow \mathrm R_r\backslash \mathrm G_n
\end{equation}
is an open embedding whose image has full measure in $\mathrm
R_r\backslash \mathrm G_n$. The relative-invariance of $\rd g$
implies that its restriction through \eqref{mul2} has the form $\rd
p\otimes \rd \bar u$, where $\rd \bar u$ is a Haar measure on
$\bar{\mathrm U}_{r,n-r}$, and $\rd p$ is a positive Borel measure
on $\mathrm R_r\backslash \mathrm P_{r, n-r}$ which is
relative-invariant under right translations of $\mathrm G_{n-r}'$.

Note that
\[
  u\oN_{n-r}'u^{-1}\subset \mathrm R_r\quad \textrm{for all }u\in \mathrm U_r.
\]
Therefore the multiplication map
\begin{equation}\label{mul3}
  \mathrm U_r\times   (\oN_{n-r}'\backslash \mathrm G_{n-r}') \rightarrow \mathrm R_r\backslash \mathrm P_{r, n-r}
\end{equation}
is well-defined. Easy calculation shows that it is in fact a
diffeomorphism. By $\mathrm G_{n-r}'$-relative-invariance, the pull
back of $\rd p$ through \eqref{mul3} has the form $\rd u\otimes \rd
g'$, where $\rd u$ is a positive Borel measure on $\mathrm U_r$, and
$\rd g'$ is a relative-invariant positive Borel measure on
$\oN_{n-r}'\backslash \mathrm G_{n-r}'$.

Now, the multiplication map
\begin{equation}\label{mul4}
 (\oN'_{n-r}\backslash \mathrm
P_{n-r}')\times \mathrm Q_{n-r}'\rightarrow \oN_{n-r}'\backslash
\mathrm G_{n-r}'
\end{equation}
is an open embedding whose image has full measure in
$\oN_{n-r}'\backslash \mathrm G_{n-r}'$. The relative-invariance of
$\rd g'$ implies that its restriction through \eqref{mul4} has the
form $\rd p'\otimes \rd q'$, where $\rd p'$  and $\rd q'$ are as in
the Lemma. This finishes the proof.
\end{proof}

Denote by $\RP^\circ_{r+1}$  the stabilizer of $\mathrm e_{r+1}\in
\rk^n$ in $\mathrm G_n$. It consists all matrices in $\mathrm G_n$
whose $(r+1)$-th column equals $\mathrm e_{r+1}$.

\begin{lem}\label{key} There exists an injective homomorphism
\[\eta: \ind_{\RP_{r,n-r}\cap \RP^\circ_{r+1}}^{\RG_n}\sigma \wo\tau_s\rightarrow \con^{\,\varsigma}(\rk^n;\rho_s)\]
of representations of $\RG_n$ such that
\be(\eta(\phi)(g^{-1}\mathrm e_{r+1}))(g)=\phi(g)\ee
for every $\phi\in \ind_{\RP_{r,n-r}\cap \RP^\circ_{r+1}}^{\RG_n} \sigma \wo\tau_s$ and $g\in \RG_n$.
\end{lem}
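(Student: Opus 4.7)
My plan is to realize $\eta$ as the composition of several natural maps coming from Lemma \ref{Schwartz}, and then to verify the pointwise identity by unwinding the explicit formulas. The key geometric facts are that $\RG_n$ acts transitively on $\rk^n\setminus\{0\}$ with $\RP^\circ_{r+1}$ the stabilizer of $\mathrm e_{r+1}$, and that $\RP_{r,n-r}\RP^\circ_{r+1}$ is the open dense subset of $\RG_n$ consisting of matrices whose $(r+1,r+1)$-entry is non-zero. An essential auxiliary observation is that the flag variety $\RG_n/\RP_{r,n-r}$ is compact, so the smooth induction $\rho_s$ coincides with the Schwartz induction $\ind_{\RP_{r,n-r}}^{\RG_n}(\sigma\wo\tau_s)$; this allows the parts of Lemma \ref{Schwartz} to be applied uniformly throughout.

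The construction goes as follows. Lemma \ref{Schwartz}(a) gives an isomorphism
\[\ind_{\RP_{r,n-r}\cap \RP^\circ_{r+1}}^{\RG_n}(\sigma\wo\tau_s) \;\cong\; \ind_{\RP^\circ_{r+1}}^{\RG_n}\ind_{\RP_{r,n-r}\cap \RP^\circ_{r+1}}^{\RP^\circ_{r+1}}(\sigma\wo\tau_s).\]
Since $\RP_{r,n-r}\RP^\circ_{r+1}$ is open in $\RG_n$, Lemma \ref{Schwartz}(b) with $S=\RP_{r,n-r}$, $T=\RP^\circ_{r+1}$ yields an $\RP^\circ_{r+1}$-equivariant injection
\[\ind_{\RP_{r,n-r}\cap \RP^\circ_{r+1}}^{\RP^\circ_{r+1}}(\sigma\wo\tau_s) \hookrightarrow \rho_s\big|_{\RP^\circ_{r+1}}\]
via extension by zero outside $\RP_{r,n-r}\RP^\circ_{r+1}$. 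Applying $\ind_{\RP^\circ_{r+1}}^{\RG_n}$ and combining with the previous isomorphism gives an $\RG_n$-equivariant injection
\[\ind_{\RP_{r,n-r}\cap \RP^\circ_{r+1}}^{\RG_n}(\sigma\wo\tau_s) \hookrightarrow \ind_{\RP^\circ_{r+1}}^{\RG_n}(\rho_s|_{\RP^\circ_{r+1}}).\]
Since $\rho_s$ is Casselman-Wallach, hence nuclear as a Fr\'echet space, Lemma \ref{Schwartz}(c) identifies the right-hand side with $\con^{\,\varsigma}(\rk^n\setminus\{0\};\rho_s)$. Finally, extension by zero from the open Nash submanifold $\rk^n\setminus\{0\}\subset\rk^n$ gives an $\RG_n$-equivariant inclusion $\con^{\,\varsigma}(\rk^n\setminus\{0\};\rho_s)\hookrightarrow \con^{\,\varsigma}(\rk^n;\rho_s)$, and the composition of all these maps is $\eta$.

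To verify the identity, I would trace $\phi$ through the chain. Lemma \ref{Schwartz}(a) sends $\phi$ to $h\mapsto(q\mapsto \phi(qh))$. Lemma \ref{Schwartz}(b) then produces $h\mapsto\tilde\phi_h\in \rho_s$, where $\tilde\phi_h$ is supported on $\RP_{r,n-r}\RP^\circ_{r+1}$ with $\tilde\phi_h(st)=s\cdot\phi(th)$ for $s\in\RP_{r,n-r}$, $t\in\RP^\circ_{r+1}$. Lemma \ref{Schwartz}(c), which sends $\Psi$ to $(h\mathrm e_{r+1}\mapsto h\cdot\Psi(h^{-1}))$, then gives $\eta(\phi)(g^{-1}\mathrm e_{r+1})=g^{-1}\cdot\tilde\phi_g\in\rho_s$. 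Using that $\rho_s$ is a right-translation representation, evaluating this at $g\in\RG_n$ yields $\tilde\phi_g(g\cdot g^{-1})=\tilde\phi_g(e)=\phi(g)$, as required. The main obstacle is the bookkeeping: one must carefully track which induction is Schwartz and which is smooth (justified by the compactness of $\RG_n/\RP_{r,n-r}$) and compose the three explicit formulas from Lemma \ref{Schwartz} in the correct order so that the defining identity falls out cleanly.
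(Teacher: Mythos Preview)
Your proposal is correct and follows essentially the same route as the paper: the construction $\eta=\eta_4\circ\eta_3\circ\eta_2\circ\eta_1$ using parts (a), (b), (c) of Lemma \ref{Schwartz} together with extension by zero, followed by the same unwinding of the formulas to obtain the identity. Your explicit remark that $\rho_s$ agrees with the Schwartz induction because $\RG_n/\RP_{r,n-r}$ is compact is a useful clarification that the paper leaves implicit.
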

\begin{proof} Applying part (a) of Lemma \ref{Schwartz}, we get the following isomorphism of representations of $\RG_n$:
$$
  \begin{array}{rcl}
\eta_1: \ind_{\RP_{r,n-r}\cap \RP^\circ_{r+1}}^{\RG_n}\sigma
\wo\tau_s &\rightarrow &
\ind^{\RG_n}_{\RP^\circ_{r+1}}\ind^{\RP^\circ_{r+1}}_{\RP_{r,n-r}\cap
\RP^\circ_{r+1}}\sigma \wo\tau_s,\\
 \phi& \mapsto & (g\mapsto (p^\circ
\mapsto \phi(p^\circ g))).
\end{array}
$$
 Since $\RP_{r,n-r}\RP^\circ_{r+1}$ is open in $\RG_n$, it follows from part (b) of Lemma \ref{Schwartz} that there is an injective homomorphism
\begin{eqnarray*}
\eta_2:\ind^{\RG_n}_{\RP^\circ_{r+1}}\ind^{\RP^\circ_{r+1}}_{\RP_{r,n-r}\cap \RP^\circ_{r+1}}\sigma \wo\tau_s\rightarrow
\ind^{\RG_n}_{\RP^\circ_{r+1}}(\ind^{\RG_n}_{\RP_{r,n-r}}\sigma\wo \tau_s)|_{\RP^\circ_{r+1}},
\end{eqnarray*}
such that
\bee ((\eta_2(\phi))(g))(g_0)=\begin{cases} p.(\phi(g)(p^\circ)),\ &\text{if}\ g_0=pp^\circ \in \RP_{r,n-r} \RP^\circ_{r+1};\\
0,\ &\text{otherwise}.
\end{cases}
\eee

Notice that $\RG_{n}$ acts transitively on $\rk^n\setminus\{0\}$.
Using part (c) of Lemma \ref{Schwartz}, we obtain the following
isomorphism:
\[
  \begin{array}{rcl}
\eta_3: \ind^{\RG_n}_{\RP^\circ_{r+1}}\rho_s|_{\RP^\circ_{r+1}}
 &\rightarrow & \con^{\,\varsigma}(\rk^n\setminus\{0\} ;\rho_s),\\
\psi & \mapsto & (g\mathrm{e}_{r+1}\mapsto g.\psi(g^{-1})).
 \end{array}
\] Denote
by $\eta$ the injective homomorphism
\[\eta_4 \circ \eta_3 \circ
\eta_2\circ \eta_1: \ind_{\RP_{r,n-r}\cap
\RP^\circ_{r+1}}^{\RG_n}\sigma \wo\tau_s\rightarrow
\con^{\,\varsigma}(\rk^n;\rho_s),\] where
$\eta_4:\con^{\,\varsigma}(\rk^n\setminus\{0\} ;\rho_s)
\hookrightarrow \con^{\,\varsigma}(\rk^n;\rho_s)$ is the map
obtained by extension by zero.  Then, for every $\phi\in
\ind_{\RP_{r,n-r}\cap \RP^\circ_{r+1}}^{\RG_n}\sigma \wo\tau_s$ and
$g\in \RG_n$, we have that
\begin{equation*}\begin{split}
&(\eta(\phi)(g^{-1}\mathrm{e}_{r+1}))(g)=(g^{-1}.(((\eta_2\circ
\eta_1)(\phi))(g)))(g)\\=& (((\eta_2\circ
\eta_1)(\phi))(g))(1)=((\eta_1(\phi))(g))(1)=\phi(g),
\end{split}\end{equation*}
as desired.
 \end{proof}

As in Proposition \ref{prop1}, let $\mu$ be a non-zero element in
$\mathrm{Hom}_{\mathrm R_r}(\pi\wo\sigma,\chi)$. Fix $v_\pi\in \pi$
and $v_\sigma\in \sigma$ such that
\[
  \la v_\pi, v_\sigma\ra_\mu\neq 0.
\]
Note that the multiplication map
$$
  (\mathrm P_{r,n-r}\cap \RP^\circ_{r+1})\times \mathrm  U_r\times \mathrm Q_{n-r}'\times \bar{\mathrm U}_{r,n-r}\rightarrow \RG_n
$$
is an open embedding of smooth manifolds. Let $\phi_0$ be a
compactly supported smooth function on $\mathrm U_r\times \mathrm
Q_{n-r}'\times \bar{\mathrm U}_{r,n-r}$, and let $v_\tau$ be a
vector in $\tau$. Define a $\sigma \wo\tau_s$-valued smooth function
$\phi$ on $\RG_n$ by
\[
  \phi(g)=\begin{cases}\phi_0(u,q',\bar u) \cdot (p.(v_\sigma\otimes
  v_\tau)), \ &\text{if}\ g=p uq'\bar u\\
    &\in  (\mathrm P_{r,n-r}\cap \RP^\circ_{r+1}) \mathrm  U_r \mathrm Q_{n-r}' \bar{\mathrm U}_{r,n-r};\\
0,\ &\text{otherwise}.
\end{cases}
\]
Part (d) of Lemma \ref{Schwartz} implies that $\phi\in
\mathrm{ind}_{\mathrm P_{r,n-r}\cap \RP^\circ_{r+1}}^{\mathrm
G_n}\sigma \wo\tau_s$.

It is easy to check that
\[
  p'^{-1}up'u^{-1}\in  \mathrm U_{r,n-r}\cap \RP^\circ_{r+1} \qquad
  \textrm{for all } p'\in \mathrm
P_{n-r}', \,u\in \mathrm  U_r,
\]
where $\mathrm U_{r,n-r}$ denotes the unipotent radical of $\mathrm
P_{r,n-r}$. Therefore,
\be\label{deco}
  \phi(u p' q' \bar u)=\phi(p'(p'^{-1}up'u^{-1}) u q'\bar u)=\phi_0(u,q',\bar u)\cdot  (p'.(v_\sigma\otimes v_\tau)),
\ee for all  $\,u\in \mathrm  U_r$, $p'\in \mathrm P_{n-r}'$,
$q'\in\mathrm Q_{n-r}'$ and  $\bar u\in \bar{\mathrm U}_{r,n-r}$.

For simplicity in notation, put
\[
  \Omega:= \mathrm U_r\times (\oN'_{n-r}\backslash \mathrm
P_{n-r}')\times
    Q_{n-r}'\times \bar{\mathrm U}_{r,n-r}.
\]
 Recall the homomorphism
$\eta$ of Lemma \ref{key}. Then we have that
\begin{eqnarray}
 \label{f16}& &\oZ_\mu(v_\pi,\eta(\phi))\\
\nonumber &=& \int_\Omega \la u p' q' \bar u.v_\pi,\Lambda\big(\eta(\phi)((u p' q' \bar u)^{-1}
\mathrm{e}_{r+1})(u p' q' \bar u)\big)\ra_\mu\, \rd u \,\rd p' \,\rd q'\,\rd \bar u \\
 \nonumber & &\qquad \qquad \qquad \qquad \qquad \qquad \, (\text{by Lemma \ref{decm}})\\
\nonumber &=& \int_\Omega \la u p' q' \bar u.v_\pi,
    (\Lambda\circ \phi)(u p' q' \bar u)\ra_\mu\, \rd u \,\rd p' \,\rd q'\,\rd \bar u \\
\nonumber & &\qquad \qquad \qquad  \qquad \qquad \qquad (\text{by
Lemma
 \ref{key}})\\
\nonumber     &=& \int_\Omega \la u p' q' \bar u.v_\pi,
v_\sigma\ra_\mu \,
    \lambda(p'. v_\tau)\,\phi_0(u,q',\bar u)\, \rd u \,\rd p' \,\rd q'\,\rd \bar u.\\
 \nonumber   & &\qquad \qquad \qquad \qquad \qquad \qquad (\text{by \eqref{deco}})
  \end{eqnarray}
Here in the last term of \eqref{f16}, the representation $\tau$ of
$\mathrm G_{n-r}$ is viewed as a representation of $\mathrm
G'_{n-r}$ via the obvious isomorphism $\mathrm G_{n-r}\cong \mathrm
G'_{n-r}$. Likewise, we view the character $\psi$ of $\mathrm
N_{n-r}$ as a character of $\mathrm N_{n-r}'$. Recall from
\cite[Section 3]{JS1} that for each smooth function $W$ on $\mathrm
P_{n-r}'$, if
\[
  W(np')=\psi(n)^{-1}W(p'), \quad n\in \mathrm N_{n-r}',\, p'\in \mathrm
  P_{n-r}',
  \]
and $W$ has compact support modulo $\mathrm N_{n-r}'$, then there is
a vector $v\in \tau$ such that
\[
  W(p')=\lambda(p'.v),\quad p'\in  \mathrm P_{n-r}'.
\]
Thus, by \eqref{f16}, the integral $\oZ_\mu(v_\pi,\eta(\phi))$ is
absolutely convergent and non-zero for appropriate $v_\tau$ and
$\phi_0$. This completes the proof of Proposition \ref{prop1}.

\section{Proof of Proposition \ref{prop2}}\label{sec4}

For every $\ft=(t_1,\cdots,t_{n-r})\in (\BR_+^\times)^{n-r}$ and
$\fs=(s_1,\cdots,s_r)\in \rk^r$, put
\[
a_\ft:=\diag\{1,\cdots,1,t_1,\cdots,t_{n-r}\}\in \mathrm G_{n-r}'
\]
and
\[
 b_\fs:=\begin{bmatrix}1_r & u_\fs \\ 0 & 1_{n-r}\end{bmatrix}\in \mathrm U_r, \quad \textrm{where }\,
u_\fs=\begin{bmatrix}s_1 & 0 &\cdots &0  \\ \vdots & \vdots & &\vdots\\
s_r & 0 &\cdots &0\end{bmatrix}.
\]

Fix an arbitrary maximal compact subgroup $\RK_n$ of $\RG_n$. It is
easy to check that there is a positive character $\gamma$ on
$(\BR_+^\times)^{n-r}$ such that \be\label{eq:4.1}
\int_{\RR_r\backslash \RG_n}\varphi(g)\,\rd g=\int_{\rk^{r}\times
(\RR_+^\times)^{n-r}\times \RK_n} \gamma(\ft)\,\varphi(a_\ft b_\fs
k)\,\rd\fs\,\rd^\times \ft\,\rd k, \ee for all non-negative
continuous functions $\varphi$ on $\RR_r\backslash \RG_n$. Here $\rd
k$ is the normalized Haar measure on $\RK_n$; and $\rd \fs$ and
$\rd^\times \ft$ are appropriate Haar measures on $\rk^{r}$ and
$(\BR_+^\times)^{n-r}$, respectively.

For each $ g\in \RG_n$, write
$$||g||:=\mathrm{Tr}(g\ ^t\bar{g})+\mathrm{Tr}((g\ ^t\bar{g})^{-1}), \qquad$$
where $\,^t\bar{g}$ denotes the conjugate transpose of $g$. For each
$\ft=(t_1,\cdots,t_{n-r})\in (\BR_+^\times)^{n-r}$, put
$$
  ||\ft||:=\prod_{i=1}^{n-r}(t_i+t_i^{-1}).
$$
Note that there exists a constant $c$, which depends on $n$ only,
such that
$$||a_\ft||\le c\,||\ft||^2,\qquad \ft\in (\BR^\times_+)^{n-r}.$$

Take a positive integer $c_0$ such that
\be\label{eq:4.2}
 \gamma(\ft)\le ||\ft||^{c_0},\qquad  \ft\in (\BR_+^\times)^{n-r}.
\ee Since the bilinear form $\la\, ,\,\ra_\mu$ is continuous, there
exist  continuous seminorms $|\cdot|_{\pi,1}$  and $|\cdot|_\sigma$
on  $\pi$ and $\sigma$ respectively such that \be\label{eq:4.3}
 |\la u,v\ra_\mu|\le |u|_{\pi,1}\cdot |v|_\sigma, \qquad
  u\in \pi,\, v\in \sigma.
\ee Using the moderate growth condition on $\pi$, we get two
positive integers $d$ and $c_1$, and a continuous seminorm
$|\cdot|_{\pi,2}$ on $\pi$ so that
\be
  \label{eq:4.4} |(a_\ft b_\fs
k).u|_{\pi,1}\le ||b_\fs||^d\cdot ||\ft||^{c_1}\cdot |u|_{\pi,2},
\ee
for all $\fs\in \rk^{r}$, $\ft\in (\BR_+^\times)^{n-r}$, $k\in
\RK_n$ and $u\in \pi$. Since $\RU_r$ acts trivially on $\sigma\wo
\tau_s$, one has that \be\label{eq:4.5.} f(a_\ft b_\fs  k)=a_\ft
b_\fs.f(k)=f(a_\ft k),\ee for all $\fs\in \rk^{r}$, $\ft\in
(\BR_+^\times)^{n-r}$, $k\in \RK_n$ and $f\in \rho_s.$

Now, for every $u\in \pi$, $f\in \rho_s$ and $\phi\in \omega$, we
have that
\begin{eqnarray} \label{eq:a1}&&|\oZ_\mu(u,f\otimes\phi)|\\
\nonumber&\le &\int_{\RR_r\backslash \RG_n}|\la g.u,(\Lambda\circ f)(g)\ra_\mu|\cdot|\phi(g^{-1}\mathrm e_{r+1})|\,\rd g \\
\nonumber&=&\int_{\rk^{r}\times (\BR_+^\times)^{n-r}\times \RK_n}
\gamma(\ft)\cdot
|\la ( a_\ft b_\fs k).u,(\Lambda\circ f)( a_\ft b_\fs k)\ra_\mu|\\
\nonumber&&\qquad \qquad \qquad\,  \cdot |\phi(( a_\ft b_\fs
k)^{-1}e_{r+1})|\,\rd \fs
\,\rd^\times \ft \,\rd k \qquad\quad (\text{by}\ \eqref{eq:4.1})\medskip \\
\nonumber&\le&\int_{\rk^{r}\times (\BR_+^\times)^{n-r}\times \RK_n}
||\ft||^{c_0}\cdot
|( a_\ft b_\fs k).u|_{\pi,1}\cdot|(\Lambda\circ f)(a_\ft b_\fs k)|_\sigma\\
\nonumber& &\qquad \qquad \qquad\, \cdot|\phi(( a_\ft b_\fs
k)^{-1}e_{r+1})|\,\rd \fs
\,\rd^\times \ft\, \rd k \quad\qquad (\text{by}\ \eqref{eq:4.2}\ \text{and}\ \eqref{eq:4.3})\medskip\\
\nonumber&\le&|u|_{\pi,2}\cdot \int_{\rk^{r}\times
(\BR_+^\times)^{n-r}\times \RK_n} ||\ft||^{c_0+c_1}\cdot
||b_\fs||^d\cdot
|(\Lambda\circ f)(a_\ft k)|_\sigma\\
\nonumber&& \qquad \qquad \qquad\, \cdot | \phi(( a_\ft b_\fs
k)^{-1}e_{r+1})|\,\rd \fs \,\rd^\times \ft \,\rd k. \quad\qquad
(\text{by}\ \eqref{eq:4.4}\ \text{and}\ \eqref{eq:4.5.})
\end{eqnarray}

\

Recall that the  linear map $\Lambda:\sigma\wo \tau\rightarrow
\sigma$ is continuous. Then it follows from the moderate growth
condition on $\sigma\wo\tau$ that there exist a positive integer
$c_2$ and a continuous seminorm $|\cdot|_{\sigma\wo \tau}$ on
$\sigma\wo \tau$ such that
\[
  |\Lambda(p.u)|_{\sigma}\le
||p||^{c_2}\cdot|u|_{\sigma\wo \tau},\qquad p\in \RP_{r,n-r},\,  u\in
\sigma\wo \tau.
\]

\begin{lem}\label{lem:5.1} For every  positive integer
$N$, there is a continuous seminorm $|\cdot|_{\sigma\wo \tau,N}$ on
$\sigma\wo \tau$ (which depends on $|\cdot|_{\sigma\wo \tau}$ and
$N$) such that \[ |\Lambda(a_\ft.u)|_\sigma\le
\xi(\ft)^{-N}\cdot||\ft||^{c_2}\cdot |u|_{\sigma\wo \tau,N}, \quad
u\in \sigma\wo \tau,\,\,\ft\in (\BR_+^\times)^{n-r}\] where
\[
\xi(\ft):=\prod_{i=1}^{n-r-1} (1+\frac{t_i}{t_{i+1}})\quad
\textrm{for } \ft=(t_1,\cdots,t_{n-r}).\]
\end{lem}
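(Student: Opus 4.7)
The plan is to reduce the estimate to the classical rapid decay of Whittaker functions on $\RG_{n-r}$. With $D_\ft:=\diag(t_1,\ldots,t_{n-r})\in\RG_{n-r}$, we have $a_\ft=\bigl[\begin{smallmatrix}1_r&0\\0&D_\ft\end{smallmatrix}\bigr]\in\RP_{r,n-r}$, whose image under the projection $\RP_{r,n-r}\to\RG_r\times\RG_{n-r}$ is $(1_r,D_\ft)$. Hence $a_\ft$ acts trivially on the $\sigma$-factor of $\sigma\wo\tau$, so that for a pure tensor $u_0\otimes v_0\in\sigma\wo\tau$,
\[
\Lambda(a_\ft.(u_0\otimes v_0))=\lambda(\tau(D_\ft).v_0)\,u_0.
\]
This reduces the whole problem to bounding the Whittaker function $v\mapsto \lambda(\tau(D_\ft).v)$ on $\tau$.

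The required estimate is the standard rapid decay of Whittaker functions: for every positive integer $N$ there exist a continuous seminorm $q_N$ on $\tau$ and a constant $c_2'$ depending only on $\tau$ such that
\[
|\lambda(\tau(D_\ft).v)|\le \xi(\ft)^{-N}\,\|\ft\|^{c_2'}\,q_N(v),\qquad v\in\tau,\ \ft\in(\R_+^\times)^{n-r}.
\]
This is proved in Wallach's book \cite{W} for generic irreducible Casselman-Wallach representations of $\GL_{n-r}(\rk)$. The mechanism is iterated integration by parts against the character $\psi$: since $D_\ft$ conjugates the $i$-th simple root subgroup of $\oN_{n-r}$ by the ratio $t_i/t_{i+1}$, each differentiation along that root direction, transferred onto $\lambda$ through the equivariance $\lambda(\tau(n).\cdot)=\psi(n)\lambda(\cdot)$ for $n\in\oN_{n-r}$, contributes a factor $(1+t_i/t_{i+1})^{-1}$; iterating $N$ times in each simple-root direction yields $\xi(\ft)^{-N}$, while the moderate growth of $\tau$ absorbs the rest into $\|\ft\|^{c_2'}$. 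Since $\|a_\ft\|\asymp\|\ft\|^2$, comparing with the earlier bound $|\Lambda(p.u)|_\sigma\le\|p\|^{c_2}|u|_{\sigma\wo\tau}$ specialized to $p=a_\ft$ shows that we may replace $c_2'$ by $c_2$ at the cost of enlarging $q_N$.

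The passage from pure tensors to arbitrary $u\in\sigma\wo\tau$ uses the projective tensor structure. Given $|\cdot|_\sigma$ on $\sigma$ and $q_N$ on $\tau$, the projective tensor seminorm
\[
|u|_{\sigma\wo\tau,N}:=\inf\Bigl\{\sum_i|u_i|_\sigma\,q_N(v_i)\ :\ u=\sum_i u_i\otimes v_i\Bigr\}
\]
is continuous on $\sigma\wo\tau$. For any representation $u=\sum_i u_i\otimes v_i$, applying $\Lambda\circ a_\ft$ termwise and using the pure-tensor bound gives
\[
|\Lambda(a_\ft.u)|_\sigma\le \xi(\ft)^{-N}\,\|\ft\|^{c_2}\,\sum_i|u_i|_\sigma\,q_N(v_i),
\]
and taking the infimum over representations produces the claim.

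The main difficulty is the rapid decay input for $\lambda$; this is a classical but nontrivial result that I would cite from \cite{W} rather than reprove here. The rest of the argument is essentially bookkeeping: identifying the action of $a_\ft$ on the tensor product, and propagating a pointwise estimate through the projective tensor structure.
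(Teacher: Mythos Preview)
Your proposal is correct and follows essentially the same approach as the paper, which simply cites \cite[Lemma 6.2]{JSZ} for this result; you have unpacked that citation by reducing to the classical rapid decay of Whittaker functionals on $\RG_{n-r}$ (as in \cite{W}) and then extending to $\sigma\wo\tau$ via the projective tensor seminorm. One small point: your argument for replacing $c_2'$ by $c_2$ ``at the cost of enlarging $q_N$'' is not quite right when $c_2'>c_2$, but since $\|p\|\ge 2$ and $c_2$ was introduced via an existential quantifier just before the lemma, one may simply take $c_2$ large enough from the outset to absorb both bounds.
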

\begin{proof}
This is proved in \cite[Lemma 6.2]{JSZ}. \end{proof}

Lemma \ref{lem:5.1} easily implies that
 \be\label{eq:4.5}
|(\Lambda\circ f)(a_\ft k)|_\sigma\le
\Pi(\ft)^{-\mathrm{Re}s}\cdot\xi(\ft)^{-N}\cdot||\ft||^{c_2}\cdot
|f(k)|_{\sigma\wo \tau,N}, \ee for all $f\in \rho_s$, $\ft\in
(\BR_+^\times)^{n-r}$ and  $k\in \RK_n$. Here
\[
\Pi(\ft):=\prod_{i=1}^{n-r}t_i\quad \textrm{for }
\ft=(t_1,\cdots,t_{n-r}).\] Let $|\cdot|_{\sigma\wo \tau,N}$ be  as
in Lemma \ref{lem:5.1}. Define a continuous seminorm
$|\cdot|_{\rho_s,N}$ on $\rho_s$ by
\begin{equation}\begin{split}\label{eq:a4}
|f|_{\rho_s,N}:= \mathrm{sup}\,\{\,|f(k)|_{\sigma\wo\tau,N}\mid k\in
\RK_n\,\},\quad f\in \rho_s.
\end{split}\end{equation}

Set $c_\mu:=c_0+c_1+c_2$.

\begin{lem}\label{lem:5.2} If
$\mathrm{Re}s>c_\mu$, then for all large enough integer $N$, one has
that \be\label{eq:4.6}
c_{s,N}:=\int_{(\BR_+^\times)^{n-r}}||\ft||^{c_\mu}\cdot\Pi(\ft)^{-\mathrm{Re}s}\cdot
\xi(\ft)^{-N}\cdot(1+t_1^{-1})^{-N}\,\rd^\times \ft<\infty.\ee
\end{lem}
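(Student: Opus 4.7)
My plan is to introduce a triangular change of variables that simultaneously decouples the three nontrivial factors $\xi(\ft)$, $1+t_1^{-1}$, and $\Pi(\ft)$, and then reduce to a product of one-dimensional integrals by a pointwise upper bound on $\|\ft\|^{c_\mu}$.

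Set $u_0 := t_1$ and $u_i := t_i/t_{i+1}$ for $1\leq i\leq n-r-1$; the inverse is $t_i = u_0\prod_{j=1}^{i-1} u_j^{-1}$. On logarithmic coordinates this substitution is upper-triangular with unit diagonal, so $\rd^\times\ft$ becomes $\rd^\times u_0\cdots \rd^\times u_{n-r-1}$. In the new coordinates,
\[
\xi(\ft)=\prod_{i=1}^{n-r-1}(1+u_i),\qquad 1+t_1^{-1}=1+u_0^{-1},\qquad \Pi(\ft)=u_0^{n-r}\prod_{j=1}^{n-r-1}u_j^{-(n-r-j)},
\]
so that $\xi(\ft)$ and $1+t_1^{-1}$ depend on disjoint sets of variables. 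The elementary estimate $\max(t_i,t_i^{-1})\leq \max(u_0,u_0^{-1})\prod_{j<i}\max(u_j,u_j^{-1})$, which is immediate from $|\log t_i|\leq |\log u_0|+\sum_{j<i}|\log u_j|$, then gives
\[
\|\ft\|^{c_\mu}\leq C\,(u_0+u_0^{-1})^{(n-r)c_\mu}\prod_{j=1}^{n-r-1}(u_j+u_j^{-1})^{(n-r-j)c_\mu}
\]
for a constant $C$ depending on $n-r$ and $c_\mu$.

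Consequently the integrand is dominated by a product $F(u_0)\prod_{j=1}^{n-r-1}G_j(u_j)$, and Fubini reduces the problem to checking convergence of $n-r$ separate one-dimensional integrals over $(0,\infty)$. For the $u_0$-integral, the tail at $u_0\to\infty$ requires precisely $\mathrm{Re}s>c_\mu$ (this is the essential use of the hypothesis, and arises from the factor $\Pi(\ft)^{-\mathrm{Re}s}$), while the behavior at $u_0\to 0$ requires $N>(n-r)(c_\mu+\mathrm{Re}s)$ so that $(1+u_0^{-1})^{-N}\sim u_0^{N}$ absorbs the negative powers coming from $(u_0+u_0^{-1})^{(n-r)c_\mu}$ and $u_0^{-(n-r)\mathrm{Re}s}$. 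For the $u_j$-integral with $j\geq 1$, the behavior at $0$ is controlled again by $\mathrm{Re}s>c_\mu$ via the factor $u_j^{(n-r-j)\mathrm{Re}s}$, while the behavior at $\infty$ requires $N>(n-r-j)(c_\mu+\mathrm{Re}s)$ so that $(1+u_j)^{-N}$ dominates the polynomial growth coming from $\|\ft\|^{c_\mu}$.

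Taking $N$ larger than $(n-r)(c_\mu+\mathrm{Re}s)$ satisfies all of these inequalities simultaneously, yielding $c_{s,N}<\infty$. The only substantive point of the argument is identifying the substitution that decouples $\xi$ from $1+t_1^{-1}$ while keeping $\|\ft\|^{c_\mu}$ controllable by a separable product; once this is done, the verifications at $0$ and $\infty$ are routine endpoint estimates, and the appearance of the precise bound $\mathrm{Re}s>c_\mu$ in the hypothesis is naturally explained as the constraint needed to control the $u_0\to\infty$ tail.
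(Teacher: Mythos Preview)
Your argument is correct. The change of variables $u_0=t_1$, $u_i=t_i/t_{i+1}$ is exactly the right device: it makes $\xi(\ft)$, $(1+t_1^{-1})$, and $\Pi(\ft)$ depend on the $u_j$'s in a separable way, and your bound on $\|\ft\|^{c_\mu}$ via $|\log t_i|\le |\log u_0|+\sum_{j<i}|\log u_j|$ is legitimate and yields a product of one-variable integrals. Your endpoint analyses are accurate; in particular, the condition $\mathrm{Re}\,s>c_\mu$ is used precisely at $u_0\to\infty$ and at $u_j\to 0$ for $j\ge 1$, and the largeness of $N$ handles the remaining endpoints. One cosmetic slip: in logarithmic coordinates the transition matrix is lower-triangular (not upper-triangular) with diagonal entries $\pm 1$, but of course this still gives Jacobian of absolute value $1$, so the measure identity $\rd^\times\ft=\prod_j \rd^\times u_j$ holds as you claim.

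As for comparison with the paper: the paper does not actually prove this lemma. It simply writes ``The proof is similar to that of \cite[Lemma 6.2]{LS}. We omit the details.'' Your argument is the standard one for such estimates and is presumably what the authors have in mind; in any case you have supplied what the paper omits.
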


\begin{proof}
The proof is similar to that of \cite[Lemma 6.2]{LS}. We omit the
details.
\end{proof}

For each positive integer $N$, put
\begin{equation}\begin{split}\label{eq:4.9} |\phi|_{\omega,N}\,=
\,\mathrm{sup}\,\{& \,(1+|a_{r+1}|_\rk)^N\cdot(|a_1|_\rk^2+\cdots+|a_r|_\rk^2)^N \cdot|\phi(k\mathbf a)|\\
 &\ | \ \mathbf a=\sum_{i=1}^n a_i\mathrm e_i\in \rk^n, k\in \RK_n\,\},\qquad \phi\in \omega.
\end{split}\end{equation}
This defines a continuous seminorm on $\omega$.

For each $\fs=(s_1,\cdots,s_{r})\in \rk^{r}$,
$\ft=(t_1,\cdots,t_{n-r})\in (\BR_+^\times)^{n-r}$ and $k\in \RK_n$,
it is routine to check that
\[ (a_\ft b_\fs k)^{-1}\mathrm
e_{r+1}=k^{-1}(t_{1}^{-1}\mathrm e_{r+1}-\sum_{i=1}^{r}s_i\mathrm
e_{i}).
\]
Together with \eqref{eq:4.9}, this implies that
 \be\label{eq:4.10}
(|s_1|_\rk^2+\cdots+|s_r|_\rk^2)^N\cdot (1+t_{1}^{-1})^N\cdot |\phi(
(a_\ft b_\fs k)^{-1}\mathrm e_{r+1})|\le |\phi|_{\omega,N} \ee for
all $\phi\in \omega$.

 In what follows we assume that
$\mathrm{Re}s>c_\mu$. Fix a positive integer $N$ which is large
enough so that \eqref{eq:4.6} holds true,  and  that
\be\begin{split}\label{eq:4.8} c_N':=\int_{\rk^{r}} ||b_\fs||^d\cdot
(|s_1|_\rk^2+\cdots+|s_r|_\rk^2)^{-N}\rd \fs <\infty.\qquad
(\fs=(s_1,\cdots,s_{r}))
\end{split}\ee
Recall the continuous seminorms $|\cdot|_{\sigma\wo \tau,N}$ on
$\sigma\wo \tau$ (see Lemma  \ref{lem:5.1}), $|\cdot|_{\rho_s,N}$ on
$\rho_s$ (see \eqref{eq:a4}) and $|\cdot|_{\omega,N}$ on $\omega$
(see \eqref{eq:4.9}). Then one has that
\begin{eqnarray}\label{eq:a2}
& &||\ft||^{c_0+c_1}\cdot ||b_\fs||^d\cdot|(\Lambda\circ f)(a_\ft
k)|_\sigma\cdot | \phi(( a_\ft b_\fs
k)^{-1}e_{r+1})| \medskip\\
\nonumber &\le& |f|_{\rho_s,N}\cdot ||\ft||^{c_\mu}\cdot \Pi(\ft)^{-\mathrm{Re}s}\cdot \xi(\ft)^{-N}\cdot ||b_\fs||^d
 \cdot | \phi(( a_\ft b_\fs
k)^{-1}e_{r+1})|\quad  (\text{by}\ \eqref{eq:4.5},\eqref{eq:a4}) \medskip \\
\nonumber &\le& |\phi|_{\omega,N} \cdot |f|_{\rho_s,N}\cdot
||\ft||^{c_\mu}\cdot\Pi(\ft)^{-\mathrm{Re}s}
\cdot\xi(\ft)^{-N}\cdot(1+t_1^{-1})^{-N}\medskip \\
\nonumber &&\cdot ||b_\fs||^d\cdot
(|s_1|_\rk^2+\cdots+|s_r|_\rk^2)^{-N}, \qquad\qquad \qquad \qquad
\qquad \qquad \qquad \, (\text{by}\ \eqref{eq:4.10})
\end{eqnarray}
for all $\fs=(s_1,\cdots,s_{r})\in \rk^{r}$,
$\ft=(t_1,\cdots,t_{n-r})\in (\BR_+^\times)^{n-r}$, $k\in \RK_n$,
$f\in \rho_s$ and $\phi\in\omega$.

Finally, one concludes from  \eqref{eq:a1}, \eqref{eq:a2},
\eqref{eq:4.6} and \eqref{eq:4.8} that
\begin{eqnarray*}|\oZ_\mu(u,f\otimes\phi)|
\le c_{N}'\cdot
c_{s,N}\cdot|u|_{\pi,2}\cdot|f|_{\rho_s,N}\cdot|\phi|_{\omega,N}
\end{eqnarray*}
 for all $u\in \pi$, $f\in
\rho_s$ and $\phi\in \omega$. This proves Proposition \ref{prop2}.

\section{The non-archimedean case}\label{sec5}
Let $\rk$ be a non-archimedean local field of characteristic  zero
throughout this section. Fix $n>r\geq 0$ as in the Introduction. We
shall prove an analog of Theorem \ref{main} in the non-archimedean
case:
\begin{introtheorem}\label{main3}
For all irreducible admissible  smooth  representations $\pi$ of
$\GL_n(\rk)$, and $\sigma$ of $\GL_r(\rk)$, and for all generic
characters $\chi$ of $\mathrm R_r$, one has that
\begin{equation*}
  \mathrm{dim}\ \mathrm{Hom}_{\mathrm R_r}(\pi \otimes \sigma,\chi)\le 1.
\end{equation*}
\end{introtheorem}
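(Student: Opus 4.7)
The plan is to follow exactly the strategy of Sections \ref{sec2}--\ref{sec4}, replacing each archimedean construct by its natural non-archimedean analog. In this setting I would take $\omega:=\mathcal{S}(\rk^n)$ to be the space of locally constant compactly supported functions on $\rk^n$ with the action \eqref{action}, work with admissible smooth representations and algebraic tensor products throughout, and redefine $\rho_s$ as the un-normalized smooth induction from $\RP_{r,n-r}$ to $\RG_n$. All the group-theoretic statements---the definitions of $\RR_r$, $\RP_{r,n-r}$, $\mathrm U_r$, $\bar{\mathrm U}_{r,n-r}$, and the measure decomposition of Lemma \ref{decm}---transfer verbatim.

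The two substantive inputs used in the archimedean proof have clean $p$-adic analogs. The non-archimedean version of Theorem \ref{main2}, namely $\dim\Hom_{\GL_n(\rk)}(\pi\otimes\pi'\otimes\omega,\chi')\le 1$ for irreducible admissible smooth $\pi,\pi'$, is the multiplicity one theorem of Aizenbud--Gourevitch--Rallis--Schiffmann \cite{AGRS}. The non-archimedean analog of Proposition \ref{prop:3.1} (generic irreducibility of $\rho_s$ in $s$) follows from Bernstein--Zelevinsky's analysis of parabolic inductions: the reducibility locus is a discrete set of $q^{-s}$ determined by the cuspidal supports of $\sigma$ and $\tau$, so $\rho_s$ is irreducible for all but countably many $s\in\mathbb{C}$, in particular at arbitrarily large $\mathrm{Re}(s)$.

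Next I would construct the zeta integral $\oZ_\mu(v,\phi)$ via the same formula \eqref{zeta}. The Schwartz-induction machinery of Lemma \ref{Schwartz} has a technically simpler but fully analogous formulation for smooth $p$-adic representations and Bernstein--Zelevinsky's $\ell$-spaces; in particular the injective homomorphism $\eta$ of Lemma \ref{key} exists with the same defining property. Proposition \ref{prop1} goes through with the same test section built from a compactly supported locally constant bump $\phi_0$ on $\mathrm U_r\times\mathrm Q_{n-r}'\times\bar{\mathrm U}_{r,n-r}$, a vector $v_\tau\in\tau$, and the Jacquet--Piatetski-Shapiro--Shalika characterization of the Kirillov model, which is valid in the $p$-adic case. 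Proposition \ref{prop2} is substantially easier in this setting: $\phi\in\omega$ is compactly supported, restricting the $b_\fs$-variable to a compact set; the $\xi(\ft)$-decay of $\Lambda$ on $\tau$ follows from Casselman's asymptotic expansion for $p$-adic groups; and for $\mathrm{Re}(s)>c_\mu$ the $\Pi(\ft)^{-s}$ factor secures absolute convergence of the remaining $\ft$-integral.

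The closing step is verbatim from Section \ref{sec2}: for any finite-dimensional $F\subset \Hom_{\RR_r}(\pi\otimes\sigma,\chi)$ we obtain, at a single irreducibility point $s$ with $\mathrm{Re}(s)>c_\mu$, an injection $F\hookrightarrow \Hom_{\RG_n}(\pi\otimes\rho_s\otimes\omega,\abs{\det}_\rk^{r+1-n})$, forcing $\dim F\le 1$. The main obstacle is not conceptual but organizational: one must verify that every step of the Nash-manifold Schwartz-induction formalism developed in Section \ref{sec3} has a faithful $p$-adic counterpart in the Bernstein--Zelevinsky framework. These verifications are standard but numerous, and the analog of Lemma \ref{lem:5.1} in particular must be re-derived from Casselman's theory of Jacquet modules rather than from the archimedean moderate-growth estimate.
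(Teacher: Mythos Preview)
Your proposal is correct---the paper itself remarks that ``the method of our proof for Theorem \ref{main} remains valid''---but the paper then deliberately takes a different and cleaner route, following \cite{GGP}. Instead of constructing a zeta integral and proving non-vanishing and convergence, the paper chooses $\tau$ to be a \emph{supercuspidal} representation of $\RG_{n-r}$ whose Bernstein component is disjoint from that of $\pi^\vee$, and then proves the exact \emph{isomorphism}
\[
\Hom_{\RR_r}(\pi\otimes\sigma,\chi)\;\cong\;\Hom_{\RG_n}(\pi\otimes\rho\otimes\omega,\abs{\det}_\rk^{r+1-n}),
\]
with $\rho=\ind_{\RP_{r,n-r}}^{\RG_n}\sigma\otimes\tau$. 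The argument is purely algebraic: one filters $\omega$ by the short exact sequence $0\to\omega^\circ\to\omega\to\C\to0$, uses Mackey theory on $\rho|_{\RP^\circ_{r+1}}$ with respect to the two $\RP^\circ_{r+1}$-orbits in $\RP_{r,n-r}\backslash\RG_n$, kills the unwanted Hom and $\Ext^1$ terms by the Bernstein-component choice of $\tau$, and finally applies the Gelfand--Kazhdan identification $\tau|_{\RP_{n-r}}\cong\ind_{\RN_{n-r}}^{\RP_{n-r}}\psi^{-1}$ together with Frobenius reciprocity.

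The trade-off is clear. Your approach is uniform with the archimedean case but forces you to redo the analytic apparatus (convergence via Casselman asymptotics, the $p$-adic Schwartz-induction formalism, the Kirillov-model input) in a setting where none of it is strictly needed. The paper's approach avoids all analysis and yields an honest isomorphism rather than an injection, at the cost of invoking Bernstein-component machinery and Ext-vanishing that have no direct archimedean analog. One minor citation point: the relevant $p$-adic input is the $\GL_n\times\GL_n$ statement with $\omega$, which the paper records as Theorem \ref{main4} (cf.\ \cite[Theorem B]{S}); \cite{AGRS} supplies the underlying $\GL_n\times\GL_{n-1}$ multiplicity one but not this exact form.
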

Here the Rankin-Selberg subgroup $\mathrm R_r$ of $\GL_n(\rk)$ is
defined as in \eqref{gp} and the generic characters of $\mathrm R_r$
are defined in the same way as in the archimedean case. As in
Introduction, let $\omega$ denote the representation of $\GL_n(\rk)$
on the space of $\C$-valued Schwartz-Bruhat functions on $\rk^n$.
Similar to what we have done in the archimedean case, we shall prove
Theorem \ref{main3} by reducing it to the following multiplicity one
theorem:

\begin{introtheorem}\label{main4} (cf. \cite[Theorem B]{S})
For all irreducible  admissible smooth  representations $\pi$ and
$\pi'$ of $\GL_n(\rk)$, and all characters $\chi'$ on $\GL_n(\rk)$,
one has that
\begin{equation*}
  \mathrm{dim}\ \mathrm{Hom}_{\GL_n(\rk)}(\pi \otimes \pi' \otimes  \omega,\chi')\le 1.
\end{equation*}
\end{introtheorem}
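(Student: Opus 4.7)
The plan is to adapt the non-archimedean Gelfand--Kazhdan method, working in the framework of invariant distributions on the product space $\GL_n(\rk) \times \rk^n$. Duality converts the desired Hom-bound into the statement that the space of $\GL_n(\rk)$-invariant distributions on $\GL_n(\rk) \times \rk^n$ with prescribed equivariance is at most one-dimensional; here $\GL_n(\rk)$ acts by conjugation on the first factor, by the standard linear action on the second, and the equivariance encodes the matrix-coefficient pairing of $\pi$ with $\pi'$ together with $\chi'$.

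Next I would apply Bernstein's localization principle for $\ell$-groups: the $\GL_n(\rk)$-action on $\GL_n(\rk) \times \rk^n$ has only finitely many orbit types, stratified by the conjugacy class of $g$ together with the relative position of $v$ inside the cyclic decomposition induced by $g$. It therefore suffices to bound the space of relatively invariant distributions supported on each stratum, modulo contributions from the boundary. The key tool is the Gelfand--Kazhdan anti-involution $\theta(g, v) := ({}^t g, v^*)$, where $v \mapsto v^*$ is the identification of $\rk^n$ with itself induced by a chosen non-degenerate symmetric bilinear form. One must verify that every $\GL_n(\rk)$-orbit is $\theta$-stable, i.e.\ that $(g, v)$ and $({}^t g, v^*)$ lie in the same $\GL_n(\rk)$-orbit. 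This reduces to the classical fact that $g$ and ${}^t g$ are conjugate over $\rk$ via a symmetric matrix, combined with a matching statement that aligns the vectors $v$ and $v^*$ relative to their respective cyclic structures.

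Granting the orbit-stability of $\theta$, the standard Gelfand--Kazhdan argument concludes: the anti-involution $\theta$ interchanges the roles of $\pi$ and $\pi'$, while simultaneously forcing any invariant distribution to be $\theta$-symmetric; a short bilinear-form manipulation then yields the bound of one. The main obstacle is the orbit-by-orbit analysis on the singular strata, where $g$ has a non-generic characteristic polynomial and $v$ lies in a proper $g$-invariant subspace. There one has to control the contribution of transversal jets of distributions along the normal slices, typically via an induction on $n$ together with a careful filtration of the local structure by jet order in the $\ell$-adic setting. The closed stratum $v = 0$ degenerates to the classical multiplicity-one theorem of Gelfand--Kazhdan for $\GL_n(\rk)$, which is already known, so that case is under control and serves as the base of the induction.
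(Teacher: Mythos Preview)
The paper does not prove this theorem; it is cited as \cite[Theorem~B]{S} and invoked as a known input in the reduction argument for Theorem~\ref{main3}. So there is no in-paper proof to compare against.

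Your outline is in the right spirit and broadly matches the method of the cited reference: convert the Hom-bound into a statement about invariant distributions on $\GL_n(\rk)\times\rk^n$ and apply the Gelfand--Kazhdan criterion with a transpose-type anti-involution. Two points in the sketch need adjustment. First, the talk of ``transversal jets'' and filtration by ``jet order'' is archimedean language; over a non-archimedean field, distributions on an $\ell$-space supported on a locally closed subset carry no normal derivatives, and Bernstein's localization reduces the problem directly to orbit-wise $\theta$-stability (together with a check on the relevant modular characters), with no jet analysis required. Second, the anti-involution on the $\rk^n$ factor must intertwine the linear $\GL_n$-action with its twist by $g\mapsto{}^tg^{-1}$; a fixed symmetric bilinear form on $\rk^n$ does not accomplish this, and in practice one passes to the linear dual of $\rk^n$ (equivalently, one invokes the Fourier transform on $\omega$). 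The orbit-stability verification then concerns pairs $(g,v)$ versus $({}^tg,\ell)$ with $\ell$ a linear functional, and it is this version that genuinely reduces to the existence of a symmetric conjugator between $g$ and ${}^tg$. With these corrections the architecture you describe is the correct one.
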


Now we turn to the proof of Theorem  \ref{main3}. The method of our
proof for Theorem \ref{main} remains valid. However, we will exploit
a different (and simpler) approach, which is developed in
\cite{GGP}.  As in Section \ref{sec2}, we write $\mathrm
G_m:=\GL_m(\rk)$ and $\RN_m:=\RN_m(\rk)$ for every non-negative
integer $m$.  For each smooth representation $\varrho$ of a closed
subgroup of $\RG_n$, write $\varrho^\vee$ for its contragradient
representation. In this section, we use $``\mathrm{ind}"$ to
indicate the un-normalized smooth induction with compact support.

Let $\pi$, $\sigma$ and $\chi$ be as in Theorem \ref{main3}.
Following \cite[Section 15]{GGP}, we fix a supercuspidal
representation $\tau$ of $\RG_{n-r}$ such that for all Levi
subgroups $M$ of $\RG_r$ and all  supercuspidal representations
$\mu$ of $M$, $\pi^\vee$ does not belong to the Bernstein component
(see \cite[Section 2.2]{B}, for example) associated to $(M\times
\RG_{n-r},\mu\otimes \tau)$. Without loss of generality, we assume
that $\rho:=\mathrm{ind}_{\RP_{r,n-r}}^{\RG_n}\sigma\otimes \tau$ is
irreducible (otherwise, using \cite[Theorem 3.2]{Sa}, replace $\tau$
by its twist by a suitable  unramified character). Here
$\RP_{r,n-r}$ is the parabolic subgroup of $\RG_n$ as in \eqref{pb}, and $\sigma\otimes\tau$ is viewed as a representation of
$\mathrm P_{r,n-r}$ by inflation through the homomorphism $\mathrm
P_{r,n-r}\rightarrow \RG_r\times \RG_{n-r}$.

As in \eqref{dec}, we assume that the decent of $\chi$ on
$\RG_r\times \RN_{n-r}$ has the form $1\otimes \psi$. We claim that
\begin{equation*}
\mathrm{Hom}_{\mathrm R_r}(\pi \otimes \sigma,\chi)\cong
\mathrm{Hom}_{\mathrm G_n}(\pi \otimes \rho\otimes
\omega,|\det|_\rk^{r+1-n}).
\end{equation*}
Using Theorem \ref{main4}, this clearly implies Theorem \ref{main3}. \\

\emph{Proof of the claim:} Write $\omega^\circ $ for the space of
$\mathbb C$-valued Schwartz-Bruhat functions on $\rk^n\setminus
\{0\}$, which is a smooth representation of $\RG_n$ as in
\eqref{action}. Then we have an obvious exact sequence
\[0\rightarrow \omega^\circ\otimes \rho\rightarrow \omega\otimes \rho\rightarrow \rho\rightarrow 0\]
of smooth representations of $\RG_n$. Applying the functor
\[
  \mathrm{Hom}_{\RG_n}(\bullet\,,\,|\det|_k^{r+1-n}\otimes\pi^\vee)
 \]
to the above short exact sequence, we get an exact sequence
\begin{equation*}\begin{split}
&0\rightarrow
\mathrm{Hom}_{\RG_n}(\rho,|\det|_\rk^{r+1-n}\otimes\pi^\vee)\rightarrow
 \mathrm{Hom}_{\RG_n}(\omega\otimes \rho,|\det|_\rk^{r+1-n}\otimes\pi^\vee)\\
 &\rightarrow  \mathrm{Hom}_{\RG_n}(\omega^\circ\otimes \rho,|\det|_\rk^{r+1-n}\otimes\pi^\vee)
 \rightarrow  \mathrm{Ext}^{1}_{\RG_n}(\rho,|\det|_\rk^{r+1-n}\otimes\pi^\vee).
\end{split}\end{equation*}
It follows from the assumption on $\tau$ that (\cite[Section
2.2]{B})
\begin{equation*}\begin{split}
& \mathrm{Hom}_{\RG_n}(\rho,|\det|_\rk^{r+1-n}\otimes \pi^\vee)\\
=\ &\mathrm{Hom}_{\RG_n}(\mathrm{ind}_{\RP_{r,n-r}}^{\RG_n}
\sigma\otimes \tau,|\det|_\rk^{r+1-n}\otimes \pi^\vee)=0,\end{split}\end{equation*}
 and that
\begin{equation*}\begin{split}
& \mathrm{Ext}^{1}_{\RG_n}(\rho,|\det|_\rk^{r+1-n}\otimes \pi^\vee)\\
=\ &\mathrm{Ext}^{1}_{\RG_n}(\mathrm{ind}_{\RP_{r,n-r}}^{\RG_n}
\sigma\otimes \tau,|\det|_\rk^{r+1-n}\otimes \pi^\vee)=0.\end{split}\end{equation*}
Therefore, one has that
\begin{equation}\begin{split}\label{iso1} & \mathrm{Hom}_{\RG_n}(\omega\otimes \rho,|\det|_\rk^{r+1-n}\otimes \pi^\vee)\\
=\ & \mathrm{Hom}_{\RG_n}(\omega^\circ\otimes
\rho,|\det|_\rk^{r+1-n}\otimes \pi^\vee).
\end{split}\end{equation}

Recall that  the space $\rk^n$ of column vectors carries the
standard action of $\RG_n$, and has the standard basis $\mathrm e_1,
\mathrm e_2,\cdots,\mathrm e_n$. Denote by $\RP^\circ_i$ the
subgroup of $\RG_n$ fixing $\mathrm e_i$ ($i=1,2,\cdots,n$). Then we
have that (cf. \cite[Exercise 6 (ii)]{H})
\begin{equation}\label{iso2} \omega^\circ\otimes \rho
 \cong (\mathrm{ind}_{\RP^\circ_{r+1}}^{\RG_n}\mathbb
C)\otimes \rho\cong
\mathrm{ind}_{\RP^\circ_{r+1}}^{\RG_n}(\rho|_{\RP^\circ_{r+1}}).
\end{equation}

Now we analyze the representation $\rho|_{\RP^\circ_{r+1}}$. Denote
by $w\in \RG_n$ the permutation matrix which exchanges $\mathrm e_1$
and $\mathrm e_{r+1}$, and fixes $\mathrm e_i$ whenever $i\ne 1,
r+1$. Write $[1_n], \,[w]\in \RP_{r,n-r}\backslash\RG_n$ for the
cosets which are respectively represented by $1_n$ and $w$. Note
that under right translation, $\RP^\circ_{r+1}$ has two obits in
$\RP_{r,n-r}\backslash\RG_n$: one is open and contains $[1_n]$; the
other is closed and contains $[w]$. Set
\[
\RT_i:=\RP_{r,n-r}\cap \RP_i^\circ,\qquad i=1,2,\cdots,n.
\]
Then  the stabilizer of $[1_n]\in \RP_{r,n-r}\backslash\RG_n$ in
$\RP^\circ_{r+1}$ is $\RT_{r+1}$; and the stabilizer of $[w]\in
\RP_{r,n-r}\backslash\RG_n$ in $\RP^\circ_{r+1}$ is $\RT_{1}^w$.
Here and henceforth, for every subgroup $H$ of $\RG_n$, we set
$H^{w}:=wHw$. For every smooth representation $\varrho$ of $H$, we
denote by $\varrho^{w}$ the representation of $H^{w}$ which has the
same underlying space as that of $\varrho$, and has the action
$$\varrho^{w}(h)=\varrho(whw),\qquad h\in H^{w}.$$

Using Mackey theory, we get a short exact sequence
\begin{equation*}
0\rightarrow
\mathrm{ind}_{\RT_{r+1}}^{\RP^\circ_{r+1}}(\sigma\otimes
\tau)|_{\RT_{r+1}} \rightarrow \rho|_{\RP^\circ_{r+1}}\rightarrow
\mathrm{ind}_{\RT_{1}^w}^{\RP^\circ_{r+1}}(\sigma\otimes
\tau)^w|_{\RT_{1}^w}\rightarrow 0\end{equation*} of smooth
representations of  $\RP^\circ_{r+1}$. Inducing this short exact
sequence to $\RG_n$, and using induction in stages, we get  a short
exact sequence
\begin{equation}\label{exact}
0\rightarrow \mathrm{ind}^{\RG_n}_{\RT_{r+1}}(\sigma\otimes
\tau)|_{\RT_{r+1}} \rightarrow
\mathrm{ind}_{\RP^\circ_{r+1}}^{\RG_n}\rho|_{\RP^\circ_{r+1}}
\rightarrow \mathrm{ind}^{\RG_n}_{\RT_{1}^w}(\sigma\otimes
\tau)^w|_{\RT_{1}^w}\rightarrow 0\end{equation} of smooth
representations of  $\RG_n$.

 Recall the mirabolic subgroup $\RP_m$ of $\RG_m$ as in \eqref{mirop}. We view the
representation $(\sigma\otimes \mathrm{ind}_{\RP_r}^{\RG_r}\mathbb
C)\otimes \tau$  of $\RG_r\times \RG_{n-r}$ as a representation of
$\RP_{r,n-r}$ by inflation. Then one has the following isomorphisms
of  representations of $\RP_{r,n-r}^w$:
\begin{equation*}\begin{split}
 &\ \mathrm{ind}_{\RT_{1}^w}^{\RP_{r,n-r}^w}(\sigma\otimes \tau)^w|_{\RT_{1}^w}\\
\cong &\ (\sigma\otimes \tau)^w\otimes
\mathrm{ind}_{\RT_{1}^w}^{\RP_{r,n-r}^w}\mathbb C \qquad (\text{\cite[Exercise 6 (ii)]{H}}) \\
\cong &\ (\sigma\otimes \tau)^w\otimes
(\mathrm{ind}_{\RT_{1}}^{\RP_{r,n-r}}\mathbb C)^w\\
\cong &\ ((\sigma\otimes \mathrm{ind}_{\RP_r}^{\RG_r}\mathbb
C)\otimes \tau)^w. \qquad (\RP_r\backslash \RG_r\cong
\RT_{1}\backslash\RP_{r,n-r})
\end{split}\end{equation*}
This  implies
 \begin{equation*}\begin{split}
 &\ \mathrm{ind}^{\RG_n}_{\RT_{1}^w}(\sigma\otimes \tau)^w|_{\RT_{1}^w}\\
 \cong & \
\mathrm{ind}^{\RG_n}_{\RP_{r,n-r}^w}\mathrm{ind}^{\RP_{r,n-r}^w}_{\RT_{1}^w}(\sigma\otimes
\tau)^w|_{\RT_{1}^w}\\
\cong&\ \mathrm{ind}^{\RG_n}_{\RP_{r,n-r}^w}((\sigma\otimes
\mathrm{ind}_{\RP_r}^{\RG_r}\mathbb C)\otimes \tau)^w\\
\cong&\ \mathrm{ind}^{\RG_n}_{\RP_{r,n-r}}((\sigma\otimes
\mathrm{ind}_{\RP_r}^{\RG_r}\mathbb C)\otimes \tau).
\end{split}\end{equation*}
Then our assumption on $\tau$ implies that (\cite[Section 2.2]{B})
\begin{equation*}
\mathrm{Hom}_{\RG_n}(\mathrm{ind}^{\RG_n}_{\RT_{1}^w}(\sigma\otimes
\tau)^w|_{\RT_{1}^w},|\det|^{r+1-n}_{\rk}\otimes \pi^\vee)=0
\end{equation*}
and
\[
  \mathrm{Ext}^1_{\RG_n}(\mathrm{ind}^{\RG_n}_{\RT_{1}^w}(\sigma\otimes
\tau)^w|_{\RT_{1}^w},|\det|^{r+1-n}_{\rk}\otimes \pi^\vee)=0.
\]
Therefore, applying the functor $\mathrm{Hom}_{\RG_n}(\bullet\, ,\,
|\det|_k^{r+1-n}\pi^\vee)$ to the short exact sequence
\eqref{exact}, we get
\begin{equation*}\begin{split}
&\mathrm{Hom}_{\RG_n}(\mathrm{ind}_{\RP^\circ_{r+1}}^{\RG_n}\rho|_{\RP^\circ_{r+1}},|\det|_\rk^{r+1-n}\otimes \pi^\vee)\\
=\
&\mathrm{Hom}_{\RG_n}(\mathrm{ind}^{\RG_n}_{\RT_{r+1}}(\sigma\otimes \tau)|_{\RT_{r+1}},|\det|_\rk^{r+1-n}\otimes \pi^\vee).
\end{split}\end{equation*}
Combine this  with \eqref{iso1} and \eqref{iso2}, we have that
\begin{equation}\label{iso3}\begin{split}
&\mathrm{Hom}_{\RG_n}(\omega\otimes \rho,|\det|_\rk^{r+1-n}\otimes \pi^\vee)\\
=\
&\mathrm{Hom}_{\RG_n}(\mathrm{ind}^{\RG_n}_{\RT_{r+1}}(\sigma\otimes \tau)|_{\RT_{r+1}},|\det|_\rk^{r+1-n}\otimes \pi^\vee).
\end{split}\end{equation}

Recall that $\tau$ is a supercuspidal representation  of
$\RG_{n-r}$. By the well-known result of Gelfand-Kazhdan
(\cite[Section 5.18]{BZ}), one has that \be\label{sup}
\tau|_{\RP_{n-r}}\cong
\mathrm{ind}_{\RN_{n-r}}^{\RP_{n-r}}\psi^{-1}.\ee Note that
$\RT_{r+1}=(\RG_r\times \RP_{n-r})\ltimes \RU_r$ ($\RU_r$ is defined
as in \eqref{urdef}). Then we have
\begin{equation*}\begin{split}
&\ \mathrm{ind}^{\RG_n}_{\RT_{r+1}}(\sigma\otimes \tau)|_{\RT_{r+1}} \\
= &\ \mathrm{ind}^{\RG_n}_{\RT_{r+1}}(\sigma\otimes
\tau|_{\RP_{n-r}})\\
\cong &\
\mathrm{ind}^{\RG_n}_{\RT_{r+1}}(\sigma\otimes\mathrm{ind}_{\RN_{n-r}}^{\RP_{n-r}}\psi^{-1})
\qquad\ (\text{by }\eqref{sup})\\
\cong &\
\mathrm{ind}^{\RG_n}_{\RT_{r+1}}\mathrm{ind}_{\RR_r}^{\RT_{r+1}}(\sigma\otimes\psi^{-1})\
\qquad\,  (\RR_r\backslash\RT_{r+1}
\cong \RN_{n-r}\backslash\RP_{n-r})\\
\cong &\ \mathrm{ind}^{\RG_n}_{\RR_r}(\sigma\otimes \psi^{-1})\\
\cong &\ \mathrm{ind}^{\RG_n}_{\RR_r}(\sigma\otimes
\chi^{-1}).\qquad \qquad\qquad \ (\chi=1\otimes \psi)
\end{split}\end{equation*}
Together with \eqref{iso3}, this implies
$$ \mathrm{Hom}_{\RG_n}(\omega\otimes \rho,|\det|_\rk^{r+1-n}\otimes \pi^\vee)=
 \mathrm{Hom}_{\RG_n}(\mathrm{ind}^{\RG_n}_{\RR_r}(\sigma\otimes \chi^{-1}),|\det|_\rk^{r+1-n}\otimes \pi^\vee).$$
 Finally, by Frobenius reciprocity (\cite[Proposition 2.29]{BZ}), one concludes from the above equality that
 $$\mathrm{Hom}_{\RG_n}(\omega\otimes \rho,|\det|_\rk^{r+1-n}\otimes \pi^\vee)=
 \mathrm{Hom}_{\RR_r}(\pi,\sigma^{\vee}\otimes \chi).$$
This finishes the proof of the claim.

\end{document}